\title{Sublevels in arrangements and the spherical arc crossing number of complete graphs} 
\author{Elizaveta Streltsova}{ISTA, Klosterneuburg, Austria}{estrelts@ist.ac.at
}{}{}
\author{Uli Wagner}{ISTA, Klosterneuburg, Austria \and \url{https://ist.ac.at/en/research/wagner-group/}}{uli@ist.ac.at}{https://orcid.org/0000-0002-1494-0568}{}
\authorrunning{E.\ Streltsova and U.\ Wagner} 
\keywords{Levels and sublevels in arrangements, $k$-sets, Radon partitions, crossing numbers, convex polytopes, Gale duality, 
Upper Bound Theorem, Generalized Lower Bound Theorem
} 
\newcounter{sideremark}
\newcommand{\arr}{\mathcal{A}}
\DeclareMathOperator{\sgn}{sgn}
\DeclareMathOperator{\conv}{conv}
\DeclareMathOperator{\cross}{cr}
\renewcommand{\leq}{\leqslant}
\renewcommand{\geq}{\geqslant}
\renewcommand{\phi}{\varphi}
\newcommand{\R}{\mathbb{R}}
\newcommand{\Z}{\mathbb{Z}}
\DeclarePairedDelimiter\ceil{\lceil}{\rceil}
\DeclarePairedDelimiter\floor{\lfloor}{\rfloor}
\newcommand{\cyclic}{V_{\textup{cyclic}}}
\newcommand{\cocyclic}{V_{\textup{cocyclic}}}
\begin{document}

\maketitle
\thispagestyle{empty}

\begin{abstract}
Levels and sublevels in arrangements---and, dually, $k$-sets and $(\leq k)$-sets---are fundamental notions in discrete and computational geometry and natural generalizations of convex polytopes, which  correspond to the $0$-level. A long-standing conjecture of Eckhoff, Linhart, and Welzl, which would generalize McMullen's Upper Bound Theorem for polytopes and provide an exact refinement of asymptotic bounds by Clarkson, asserts that for all $k\leq \floor{\frac{n-d-2}{2}}$, the number of $(\leq k)$-sets of a set $S$ of $n$ points in $\mathbf{R}^d$ is maximized if $S$ is the vertex set of a neighborly polytope.

As a new tool for studying this conjecture and related problems, we introduce the \emph{$g$-matrix}, which generalizes both the $g$-vector of a simple polytope and a Gale dual version of the $g$-vector studied by Lee and Welzl. Our main result is that the $g$-matrix of every vector configuration in $\mathbf{R}^3$ is non-negative. This implies the following result (which, by Gale duality, is equivalent to the case $d=n-4$ of the Eckhoff--Linhart--Welzl conjecture). 
Let $V=\{v_1,\dots,v_n\} \subset \mathbf{R}^3$ be a configuration 
of $n$ vectors in general position and consider the sign patterns $(\sgn(\lambda_1),\dots,\sgn(\lambda_n))\in \{-1,0,+1\}^n$ of nontrivial linear dependencies $\sum_{i=1}^n \lambda_i v_i=0$.
For $0\leq t \leq s \leq n$, let $f^*_{s,t}(V)$ denote the number of dependency patterns with $t$ negative signs $-1$ and $s$ non-zero signs $\pm 1$. Then, for all $s\leq n$, the numbers $f^*_{s,0}(V)$ and $f^*_{s,\leq 1}(V):=f^*_{s,0}(V)+f^*_{s,1}(V)$ are maximized if $V$ is \emph{coneighborly}, i.e., if every open linear halfspace contains at least $\lfloor (n-2)/2 \rfloor$ of the vectors in $V$.

As a corollary, we obtain the following result about crossing numbers: Let us normalize the vectors in $V$ to unit length (which does not affect the dependency patterns) and connect every pair of vectors by the unique shortest geodesic arc between them in the unit sphere $S^2$. This yields a drawing of the complete graph $K_n$ in $S^2$, which we call a \emph{spherical arc drawing}. These drawings generalize the well-studied class of \emph{rectilinear drawings} of $K_n$ ($n$ points in general position in $\R^2$ connected by straight-line segments), which correspond to spherical arc drawings for which $V$ is contained in an open hemisphere. Complementing previous results for rectilinear drawings, our result implies that the number of crossings in any spherical arc drawing of $K_n$ is at least $\frac{1}{4}\lfloor \frac{n}{2}\rfloor \lfloor \frac{n-1}{2}\rfloor \lfloor \frac{n-2}{2}\rfloor \lfloor \frac{n-3}{2}\rfloor$, which equals the conjectured value of the crossing number of $K_n$. Moreover, unlike for rectilinear drawings, the lower bound for spherical arc drawings is attained, by coneighborly configurations.
\end{abstract}

\section{Introduction}
\label{s:intro}
In this paper, we study the interplay between several classical topics in discrete and computational geometry: the combinatorial theory of convex polytopes, the complexity of $(\leq k)$-sublevels in arrangements, Radon partitions, and crossing numbers.

Let $V = \{ v_1, \ldots, v_n \} \subset \R^3$ be a set of $n\geq 3$ vectors; fixing the labeling of the vectors, we will also view $V$ as an $(3 \times n)$-matrix $V=[v_1|\dots|v_n] \in \R^{3 \times n}$ with column vectors $v_i$. Unless stated otherwise, we assume that $V$ is in general position, i.e., that any $3$ of the vectors are linearly independent. We refer to $V$ as a \emph{vector configration} of $\emph{rank}$ $3$. 

Let $S^2$ be the unit sphere in $\R^3$, let $\langle\cdot ,\cdot \rangle$ denote the standard inner product in $\R^3$, and let $\sgn(x)\in \{-1,0,+1\}$ denote the sign of a real number $x\in \R$. 
For $F\in \{-1,0,+1\}^n$, let $F_+$, $F_0$, and $F_-$ denote the subsets of indices $i\in [n]$ such that $F_i=+1$, $F_i=0$, and $F_i=-1$.

\subsection{Dependency and Dissection Patterns}
\label{sec:DepDiss}
We consider two sets $\mathcal{F}(V),\mathcal{F}^*(V) \subset \{-1,0,+1\}^n$ of sign vectors defined as follows. 

By definition, $\mathcal{F}^*(V)$ is the set of sign vectors $(\sgn(\lambda_1),\dots,\sgn(\lambda_n)) \in \{-1,0,+1\}^n$ given by non-trivial linear dependencies $\sum_{i=1}^n \lambda_i v_i=0$ (with coefficients $\lambda_i\in \R$, not all of them zero). We call $\mathcal{F}^*(V)$ the \emph{dependency patterns} of $V$.

We define $\mathcal{F}(V)$ as the set of all sign vectors $(\sgn(\langle v_1,u\rangle),\dots,\sgn(\langle v_n,u\rangle)) \in \{-1,0,+1\}^n$, where $u$ ranges over all non-zero vectors in $\R^3$ (equivalently, unit vectors in $S^2$). Thus, the elements $\mathcal{F}(V)$ encode partitions $V=V_-\sqcup V_0 \sqcup V_+$ by oriented planes through the origin, and we call them the \emph{dissection patterns} of $V$. Equivalently, every vector $v_i \in V$ defines a great circle 
$
H_i = \{ x \in S^2 \mid \langle v_i, x \rangle = 0 \}
$
in $S^2$ and two open hemispheres $H_i^+ = \{ x \in S^2 \mid \langle v_i, x \rangle >0 \}$ and $H_i^- = \{ x \in S^2 \mid \langle v_i, x \rangle < 0 \}.$
The resulting \emph{arrangement} $\arr(V)=\{H_1^+,\dots,H_n^+\}$ of hemispheres in $S^2$ determines a decomposition of $S^2$ into \emph{faces} of dimensions $0,1,2$, 
where two points $u,u'\in S^2$ lie in the relative interior of the same face iff $\sgn(\langle v_i,u \rangle)=\sgn(\langle v_i,u'\rangle)$ for $1\leq i\leq n$. Thus, we can identify each face of $\arr(V)$ with its \emph{signature} $F\in \mathcal{F}(V)$; by general position, the face with signature $F$ has dimension $2-|F_0|$ (there are no faces with $|F_0|>2$, i.e., the arrangement is \emph{simple}). Moreover, we call $|F_-|$ the \emph{level} of the face. We 
refer to the correspondence between $V$ and $\mathcal{A}(V)$ as \emph{polar duality} (to distinguish it from \emph{Gale duality}, see Sec.~\ref{sec:Gale} below).

Both $\mathcal{F}(V)$ and $\mathcal{F}^*(V)$ are invariant under invertible linear transformations of $\R^r$ and under positive rescaling (multiplying each vector $v_i$ by some positive scalar $\alpha_i>0$).

\begin{definition}[$f$ and $f^*$]
\label{def:f-poly}
For integers $s$ and $t$, define%
\footnote{By general position, $f_{s,t}(V)=0$ unless $0\leq s\leq d$ and $0\leq t\leq n-s$, and 
$f^*_{s,t}(V)=0$ unless $4\leq s \leq n$ and $0\leq t\leq s$. However, it will occasionally be convenient to allow an unrestricted range of indices.}
$$
f_{s,t}(V):=|\{F\in \mathcal{F}(V)\mid |F_0|=s,|F_-|=t\}|, \qquad f^*_{s,t}(V):=|\{F\in \mathcal{F}^*(V)\mid |F_-|=t, |F_+|=s-t\}|
$$
Thus, $f_{s,t}(V)$ counts the $(2-s)$-dimensional faces of level $t$ in $\arr(V)$, and $f^*_{s,t}(V)$ counts the dependency patterns with $t$ negative entries $-1$ and $s$ non-zero entries $\pm 1$.

Together, these numbers form the \emph{$f$-matrix} $f(V)=[f_{s,t}(V)]$ and the \emph{$f^*$-matrix} $f^*(V)=[f^*_{s,t}(V)]$. Equivalently, we can encode this data into two bivariate polynomials
$f_V(x,y)$ and $f^*_V(x,y)$ in $\Z[x,y]$, the \emph{$f$-polynomial} and the \emph{$f^*$-polynomial} of $V$, which are defined by 
\begin{align*} 
f_{V}(x,y) & :=  \sum_{F\in \mathcal{F}(V)} x^{|F_0|} y^{|F_-|} = \sum_{s,t} f_{s,t}(V)\, x^s y^t,\\
f^*_{V}(x,y) &:= \sum_{F\in \mathcal{F}^*(V)} x^{|F_0|} y^{|F_-|} = \sum_{s,t} f^*_{s,t}(V)\, x^{n-s} y^t.
\end{align*}
\end{definition}
We are ready to state our first result. We say that $V = \{ v_1, \ldots, v_n \} \subset \R^3$ is \emph{coneighborly} if every open linear halfspace (bounded by a plane through the orgin) in $\R^3$ contains at least $\lfloor (n-2)/2 \rfloor$ of the vectors in $V$. Coneighborly configurations in $\R^3$ are precisely the \emph{Gale duals} of $(n-4)$-dimensional \emph{neighborly polytopes} with $n$ vertices (see Section~\ref{sec:Gale}, and Example~\ref{ex:cyclic-cocyclic} for a specific example, \emph{cocyclic} configurations). It is known \cite{Padrol:2013aa} that there are at least $n^{\frac{3}{2}(1-o(1))n}$ combinatorially distinct $(n-4)$-dimensional neighborly polytopes with $n$ vertices, hence at least that many different combinatorial types of 
coneighborly configurations of $n$ vectors in $S^2$. 
\begin{theorem} 
\label{thm:GUBT-fstar}
Let $V = \{ v_1, \ldots, v_n \} \subset \R^3$ be a vector configuration in general position. Then, for all $s\leq n$, the numbers $f^*_{s,0}(V)$ and $f^*_{s,\leq 1}(V):=f^*_{s,0}(V)+f^*_{s,1}(V)$ are maximized if $V$ is coneighborly.
\end{theorem}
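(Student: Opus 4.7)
The plan is to deduce the theorem from the paper's main technical result (announced in the abstract): the non-negativity of the \emph{$g$-matrix} $g(V)=[g_{s,t}(V)]$ associated to every rank-$3$ vector configuration $V$ in general position. I would first fix the precise definition of $g(V)$ as an invertible linear transformation of the $f^*$-matrix, chosen so that it specializes to the classical $g$-vector of a simple polytope in one direction and to the Gale-dual $g$-vector of Lee and Welzl in another, while being constructed precisely to measure deviation from coneighborliness (so that many of its entries vanish on coneighborly configurations).

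Next, the key calculation is to invert this transformation and write the target quantities as explicit linear combinations of entries of $g(V)$. Concretely, I would aim for identities of the shape
\[
f^*_{s,0}(V) = A_s(n) - \sum_{(a,b)\in I_0} c^{(0)}_{a,b,s}\, g_{a,b}(V)
\quad\text{and}\quad
f^*_{s,\leq 1}(V) = B_s(n) - \sum_{(a,b)\in I_1} c^{(1)}_{a,b,s}\, g_{a,b}(V),
\]
where $A_s(n), B_s(n)$ are ``universal'' constants that depend only on $n$ (through Euler/Dehn--Sommerville-type relations satisfied by $f^*(V)$ for every configuration), and the coefficients $c^{(0)}_{a,b,s}, c^{(1)}_{a,b,s}$ are non-negative. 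Pinning down the index sets $I_0,I_1$ and verifying non-negativity of the coefficients is a combinatorial identity to be carried out by direct analogy with the Lee--Welzl analysis of the $f^*$-polynomial and with the classical derivation of McMullen's Upper Bound Theorem from $g$-vector non-negativity. Granted these identities, the assumed non-negativity $g(V)\geq 0$ immediately yields the upper bounds $f^*_{s,0}(V)\leq A_s(n)$ and $f^*_{s,\leq 1}(V)\leq B_s(n)$.

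To conclude, I would verify that coneighborly $V$ attains these bounds. By Gale duality, a rank-$3$ coneighborly configuration of $n$ vectors corresponds to an $(n-4)$-dimensional neighborly polytope with $n$ vertices, and since the $g$-matrix is designed to vanish on coneighborly configurations precisely in the index range $I_0\cup I_1$, the correction terms in the above formulas disappear. Hence $f^*_{s,0}(V)=A_s(n)$ and $f^*_{s,\leq 1}(V)=B_s(n)$ for every coneighborly $V$, which combined with the upper bound is exactly the claim of the theorem. The consistency that the common extremal value depends only on $n$ (despite the existence of many combinatorially distinct coneighborly configurations) follows automatically from the structure of the formulas.

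The principal obstacle is of course the non-negativity $g(V)\geq 0$ itself, which is the technical heart of the paper and the place where the rank-$3$ hypothesis enters crucially---an analogous statement in higher rank would imply the full Eckhoff--Linhart--Welzl conjecture. Once this is in hand, the reduction above is essentially a linear-algebraic bookkeeping task: set up the transformation between $f^*$ and $g$, verify the sign pattern of its inverse on the rows indexed by $(s,0)$ and $(s,\leq 1)$, and confirm that coneighborly configurations annihilate the required $g$-entries. A secondary subtlety is verifying the Dehn--Sommerville-like identities that pin down $A_s(n)$ and $B_s(n)$; these should follow by standard double-counting on the arrangement $\arr(V)$, or equivalently by identifying the natural relations among the $f^*_{s,t}(V)$.
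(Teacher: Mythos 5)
Your outline is the paper's own argument, merely phrased in terms of $g(V)=g(\cocyclic(n,3)\to V)$ rather than the equivalent $g^*(V)=g(V\to\cyclic(n,3))$; the paper's Lemma~\ref{lem:fstar-gstar-b1} supplies exactly the non-negative linear combinations you posit, and Theorems~\ref{thm:g-star-welzl} and~\ref{thm:g1-rank3} supply the non-negativity $g_{0,k}(V),g_{1,k}(V)\geq 0$ (equivalently, the upper bounds on $g^*_{0,k},g^*_{1,k}$) together with the vanishing of these $g$-entries on coneighborly configurations. This is the same reduction the paper performs in its two-line proof of Theorem~\ref{thm:GUBT-fstar}, so the proposal is correct and takes essentially the same route.
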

The result for $f^*_{s,0}(V)$ (and its generalization to vector configurations of arbitrary rank $r$) is known: It is equivalent, by Gale duality, to McMullen's Upper Bound Theorem for polytopes \cite{McMullen:1970aa}; a direct proof was given by Welzl \cite{Welzl:2001aa}. The result for $f^*_{s,\leq 1}(V)$ is new and confirms, for $d=n-4$, a generalization of the Upper Bound Theorem for sublevels of arrangements in $S^d$ conjectured by Eckhoff~\cite{Eckhoff:1993aa}, Linhart~\cite{Linhart:1994aa}, and Welzl~\cite{Welzl:2001aa} (see Sec.~\ref{sec:Gale}, Conjecture~\ref{GUBC}). 

\subsection{The Spherical Arc Crossing Number of $K_n$}
\label{subsec:crossing}
As an application of Theorem~\ref{thm:GUBT-fstar}, we obtain a result about crossing numbers. Determining the crossing number $\cross(K_n)$ of the complete graph $K_n$ (the minimum number of crossings in any drawing of $K_n$ in the plane $\R^2$, or equivalently in the sphere $S^2$, with edges represented by arbitrary Jordan arcs) is one of the foundational unsolved problems in geometric graph theory. This problem was first studied by Hill in the 1950's, who conjectured the following: 
\begin{conjecture}[Hill]
\begin{equation}
\label{conj:Hill}
\cross(K_n)=X(n):=\frac{1}{4}\Big\lfloor \frac{n}{2}\Big\rfloor \Big\lfloor \frac{n-1}{2} \Big\rfloor \Big\lfloor \frac{n-2}{2} \Big\rfloor \Big\lfloor \frac{n-3}{2}\Big\rfloor =\frac{3}{8}\binom{n}{4}+O(n^3)
\end{equation}
\end{conjecture}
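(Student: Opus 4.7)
Hill's Conjecture has been open for roughly seventy years, and I do not expect to settle it outright; the plan below is honest about which ingredients might plausibly come from the machinery developed in this paper and where a genuinely new idea would be required.

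The first move is to split the target equality into its two directions. The upper bound $\cross(K_n) \leq X(n)$ is classical: Hill's original cylindrical and $2$-page constructions realize the value, and (as the abstract emphasizes) coneighborly vector configurations in $\R^3$ produce spherical arc drawings attaining it as well. The substance of Hill's Conjecture is therefore the lower bound $\cross(K_n) \geq X(n)$, and the entire plan below is directed at that inequality.

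For the lower bound, I would first try to import Theorem~\ref{thm:GUBT-fstar} via a reduction to the spherical arc setting. Concretely, given an arbitrary good drawing $D$ of $K_n$ in $S^2$, I would seek a crossing-non-increasing deformation---sliding vertices to generic positions in $S^2$ and then replacing each edge by its unique shortest geodesic arc while tracking the effect on the crossing count---producing a spherical arc drawing $D'$ with $\cross(D') \leq \cross(D)$. Any such ``straightening'' theorem would reduce Hill's lower bound to the spherical arc lower bound, which in turn follows from Theorem~\ref{thm:GUBT-fstar} applied to $f^*_{4,2}$: each crossing of a spherical arc drawing on $V$ corresponds to a Radon partition of type $(2,2)$ of some quadruple, hence to a pair of dependency patterns counted by $f^*_{4,2}(V)$. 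As a parallel line of attack, I would set up a $4$-tuple counting argument: writing
$$\cross(D)=\sum_{Q\in \binom{[n]}{4}} \chi_D(Q), \qquad \chi_D(Q)\in\{0,1\} \text{ in any good drawing,}$$
I would attempt to bound the density of crossing quadruples from below through flag-algebraic inequalities on $5$- and $6$-vertex subdrawings, feeding in new (realizable) inequalities on $f^*_{4,\leq 2}$ derived from the $g$-matrix viewpoint introduced in this paper.

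The principal obstacle is precisely that neither reduction is known to be valid. On the geometric side, straightening fails for general topological drawings: there exist good drawings of $K_n$---even at the level of abstract rotation systems---that are not realized by any spherical arc drawing, so there is no a priori way to deform non-geodesic crossings away without introducing new ones. On the counting side, Theorem~\ref{thm:GUBT-fstar} supplies \emph{upper} bounds on $f^*_{s,0}$ and $f^*_{s,\leq 1}$, whereas Hill's inequality requires a matching \emph{lower} bound on crossing quadruples; and even complementary lower bounds obtained from the $g$-matrix would a priori apply only to realizable configurations, leaving non-realizable drawings uncontrolled. Closing this gap---either by a topological straightening theorem with explicit crossing-count control, or by extending the $g$-matrix framework to an abstract ``pseudo-configuration'' setting rich enough to encode arbitrary good drawings of $K_n$---is, in my view, where the real work would lie.
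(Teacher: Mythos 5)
The statement you were asked to prove is labeled a \emph{conjecture} in the paper, and the paper does not prove it: it explicitly notes that Hill's Conjecture is known only for $n\leq 12$ and remains open, and what it actually establishes is the lower bound $\cross(V)\geq X(n)$ for the restricted class of \emph{spherical arc} drawings (Theorem~\ref{thm:crossings}), not for arbitrary good drawings. Your proposal correctly recognizes this, so there is no proof in the paper to compare against and no gap to fault you for. Where your sketch overlaps with what the paper does prove, it matches the paper's route exactly: crossings in a spherical arc drawing are counted by $\frac{1}{2}f^*_{4,2}(V)$ (Observation~\ref{obs:crossings-lin-dep}), so the lower bound is equivalent to the upper bound $f^*_{4,\leq 1}(V)\leq \binom{n}{4}-X(n)$, which is the $s=4$ case of Theorem~\ref{thm:GUBT-fstar}. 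The two obstructions you identify for the full conjecture --- that no crossing-non-increasing straightening of a general good drawing into a spherical arc drawing is known, and that the paper's upper bounds on $f^*_{s,\leq 1}$ only control realizable vector configurations rather than abstract topological drawings --- are precisely why the paper stops at the spherical arc case. Your assessment is accurate as it stands.
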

This is known to hold for $n\leq 12$, but remains open in general (see \cite[Sec.~1.3]{Schaefer:2018aa} or \cite{Szekely:2016aa} for further background and references). There are several families of drawings showing that $\cross(K_n)\leq X(n)$ for all $n$, but the best lower bound to date is 
$\cross(K_n) \geq 0.985 \cdot X(n)$  \cite{Balogh:2019aa}.

Here, we prove Hill's conjecture for the following class of drawings. Let $V=\{v_1,\dots,v_n\} \subset S^2$ be a configuration of $n$ \emph{unit} vectors in general position. If we connect every pair of vectors in $V$ by the shortest geodesic arc between them in $S^2$ (which is unique, since no two vectors are antipodal, by general position) we obtain a drawing of the complete graph $K_n$ in $S^2$, which we call a \emph{spherical arc drawing}. Let 
$\cross(V)$ 
denote the number of crossings in this drawing.
\begin{theorem} 
\label{thm:crossings}
For every configuration of $n$ vectors in general position in $S^2$, 
\[\cross(V) \geq \frac{1}{4}\Big\lfloor \frac{n}{2}\Big\rfloor \Big\lfloor \frac{n-1}{2} \Big\rfloor \Big\lfloor \frac{n-2}{2} \Big\rfloor \Big\lfloor \frac{n-3}{2}\Big\rfloor
\]
Moreover, the lower bound is attained with equality if $V$ is coneighborly.
\end{theorem}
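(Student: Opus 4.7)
The plan is to express $\cross(V)$ as an entry of the $f^*$-matrix of $V$ and then invoke Theorem~\ref{thm:GUBT-fstar}. The first step is a crossing criterion: for four vectors $v_a,v_b,v_c,v_d\in V$ in general position, the shortest geodesic arcs $v_av_b$ and $v_cv_d$ meet in their relative interiors iff there exist strictly positive scalars $\alpha_a,\alpha_b,\alpha_c,\alpha_d$ with $\alpha_a v_a+\alpha_b v_b=\alpha_c v_c+\alpha_d v_d$; this is because a unit vector lies on the open arc $v_a v_b$ iff it is the normalization of some $\alpha v_a+\beta v_b$ with $\alpha,\beta>0$. Equivalently, the unique (up to scalar) linear dependency among the four vectors has sign pattern $(+,+,-,-)$ with $\{a,b\}$ and $\{c,d\}$ as the positive and negative classes.

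Using this, I would classify each 4-subset $T\subseteq V$ by its dependency type: Type~I (all four signs equal, equivalently $0\in\mathrm{int}\,\conv(T)$), Type~II (one sign opposite to the other three), Type~III (balanced $2$--$2$ split). Exactly one of the three pairings of the four vertices of $T$ yields a crossing in Type~III, and none do in the other two types. Since each 4-subset contributes two sign vectors to $\mathcal{F}^*(V)$ (the dependency and its negative), Types~I, II, III contribute respectively to $f^*_{4,0}$ (and $f^*_{4,4}$), $f^*_{4,1}$ (and $f^*_{4,3}$), and twice to $f^*_{4,2}$. Consequently $\binom{n}{4}=f^*_{4,0}(V)+f^*_{4,1}(V)+\tfrac{1}{2} f^*_{4,2}(V)$ and
\[
\cross(V)\;=\;\tfrac{1}{2} f^*_{4,2}(V)\;=\;\binom{n}{4}-f^*_{4,\leq 1}(V).
\]
Theorem~\ref{thm:GUBT-fstar} applied with $s=4$ then gives the desired minimization of $\cross(V)$ on coneighborly configurations, establishing the inequality.

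To identify the common minimum value with Hill's formula $X(n)$, I would evaluate $f^*_{4,\leq 1}$ on a single concrete coneighborly example, say the cocyclic configuration of Example~\ref{ex:cyclic-cocyclic}; since coneighborly configurations are Gale duals of neighborly $(n-4)$-polytopes and neighborly polytopes of fixed dimension and vertex number all share a single $f$-vector, the value is independent of the chosen coneighborly $V$. A direct count should then yield $f^*_{4,\leq 1}(V_{\text{coneighborly}})=\binom{n}{4}-X(n)$. The main obstacle I anticipate is precisely this final arithmetic verification: while not conceptually deep, matching the formula involves nested floor functions and must be handled with care, either by routing through the face numbers of the cyclic polytope via Dehn--Sommerville relations or by exploiting the near-dihedral symmetry of a cocyclic example to reduce the count to orbits of 4-subsets.
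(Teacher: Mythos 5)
Your proof follows essentially the same route as the paper's. The crossing criterion (the arcs $v_a v_b$ and $v_c v_d$ cross iff the unique linear dependency among $\{v_a,v_b,v_c,v_d\}$ has sign pattern $(+,+,-,-)$ with $\{a,b\}$ and $\{c,d\}$ as the sign classes), the classification of $4$-subsets into three combinatorial types, the resulting identities $\binom{n}{4}=f^*_{4,0}(V)+f^*_{4,1}(V)+\tfrac12 f^*_{4,2}(V)$ and $\cross(V)=\tfrac12 f^*_{4,2}(V)$, and the appeal to Theorem~\ref{thm:GUBT-fstar} with $s=4$ all match the paper's Observation~\ref{obs:crossings-lin-dep} and the subsequent proof of Theorem~\ref{thm:crossings}.

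The only place you diverge is the final arithmetic. The paper reads off the maximum value of $f^*_{4,\leq 1}$ from the explicit closed form produced by Lemma~\ref{lem:fstar-gstar-b1} together with the equality cases of Theorems~\ref{thm:g-star-welzl} and~\ref{thm:g1-rank3}, namely $Y(n)=\sum_{k=0}^{\floor{(n-4)/2}}(n-3-2k)\bigl((k+1)n-3\binom{k+2}{2}\bigr)$, and then checks $Y(n)=\binom{n}{4}-X(n)$ in Appendix~\ref{sec:appendix-calculation}. You instead propose to compute $f^*_{4,\leq 1}$ directly on a cocyclic configuration. That is legitimate, and your remark that the resulting value is the same for every coneighborly $V$ is correct---but the reason is Theorem~\ref{thm:GUBT-fstar} itself (all coneighborly configurations attain the maximum), not the rigidity of $f$-vectors of neighborly polytopes. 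Via Gale duality, the polytope $f$-vector controls $f^*_{4,0}(V)=f_{n-4,0}(V^*)$, i.e.\ vertices of the dual polytope, but $f^*_{4,1}(V)=f_{n-4,1}(V^*)$ counts level-$1$ vertices of the dual arrangement, which are not faces of the polytope, so the Upper Bound Theorem for polytopes alone does not fix them. This is a small imprecision in your justification, not a gap, since the needed uniqueness is already supplied by the theorem you invoke. Either way, the direct count on a cocyclic example remains to be carried out; the paper's route through $Y(n)$ is arguably more systematic, since it uses quantities already in hand from the $g$-matrix machinery.
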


The fact that coneighborly configurations yield spherical arc drawings of $K_n$ achieving the number $X(n)$ of crossings in Hill's conjecture and the connection to the Eckhoff--Linhart--Welzl conjecture were first observed by Wagner \cite{Wagner:2006aa,Wagner:2008aa}. For a connection to geometric probability, in particular to old results of Wendel~\cite{Wendel:1962aa}  and Moon~\cite{Moon:1965aa}, see Remark~\ref{rem:Wendel}.

Spherical arc drawings generalize the well-studied class of \emph{rectilinear drawings} of $K_n$, given by $n$ points in general position in $\R^2$ connected by straight-line segments; more precisely, rectilinear drawings correspond to the sub-class of sperical arc drawings for which the vector configuration $V$ is \emph{pointed}, i.e., contained in an open linear halfspace. Pointed configurations $V\subset \R^3$ correspond to point sets $S\subset \R^2$ by radial projection,\footnote{Given $S=\{p_1,\dots,p_n\} \subset \R^2$, we obtain a pointed vector configuration $V=\{v_1,\dots,v_n\} \subset \R^3$ by setting $v_i:=(1,p_i)\in \{1\}\times \R^d \subset \R^3$. Conversely, if $V \subset \{x \in \R^3\colon \langle u,x\rangle>0\}$ for some $u\in S^2$ then we radially project each vector $v_i$ to the point $p_i:=\frac{1}{\langle u, v_i\rangle} v_i$ in the tangent plane $\{x\in \R^r\colon \langle u,x\rangle = 1\} \cong \R^2$.}
and under this correspondence, $\mathcal{F}(V)$ and $\mathcal{F}^*(V)$ encode the partitions of $S$ by affine lines and \emph{Radon partitions} $S=S_-\sqcup S_0 \sqcup S_+$, $\conv(S_+) \cap \conv(S_-)\neq \emptyset$, respectively. Moreover, for a pointed $V \subset S^2$, spherical arcs correspond to straight-line segments.  

Theorem~\ref{thm:crossings} complements earlier results of Lov\'asz, Vesztergombi, Wagner, and Welzl~\cite{Lovasz:2004aa} and \'Abrego and Fern\'andez-Merchant~\cite{Abrego:2005aa}, who showed that the \emph{rectilinear crossing number} $\overline{\cross}(K_n)$ (the minimum number of crossings in any rectilinear drawing of $K_n$) is at least $X(n)$; in fact, $\overline{\cross}(K_n)\geq (\frac{3}{8}+\varepsilon+o(1))\binom{n}{4}$ for some constant $\varepsilon>0$ \cite{Lovasz:2004aa}. Thus, unlike the spherical arc crossing number, the rectilinear crossing number $\overline{\cross}(K_n)$ is strictly larger than $X(n)$ (and hence larger than $\cross(K_n)$) in the asymptotically leading term. We refer to  \cite{Abrego:2013ab} for a detailed survey, including a series of subsequent improvements \cite{Balogh:2006aa,Aichholzer:2007aa,Abrego:2008aa,Abrego:2008ab} leading to the currently best bound \cite{Abrego:2012aa} $\overline{\cross}(K_n) > 277/729 \binom{n}{4}+O(n^3) > 0.37997 \binom{n}{4}+O(n^3)$. We remark that the arguments in~\cite{Lovasz:2004aa,Abrego:2005aa} have been generalized to verify Hill's conjecture for other classes of drawings, including $2$-page drawings \cite{Abrego:2013aa}, monotone drawings \cite{Balko:2015aa}, cylindrical, $x$-bounded, and shellable drawings  \cite{Abrego:2014aa}, bishellable drawings \cite{Abrego:2018aa}, and seq-shellable drawings \cite{Mutzel:2018aa}. Currently we do not know 
how spherical arc drawings relate to these other classes of drawings.

\subsection{The $g$-Matrix}
\label{sec:g-matrix-intro}
The central new notion of this paper is the $g$-matrix $g(V\to W)$ of a pair of vector configurations, which encodes the differences $f(W)-f(V)$ and $f^*(W)-f^*(V)$ of $f$-matrices and $f^*$-matrices and which generalizes both the classical $g$-vector of a simple polytope and a Gale dual version of the $g$-vector studied by Lee~\cite{Lee:1991aa} and Welzl~\cite{Welzl:2001aa}. 

The definition of the $g$-matrix (which we present here for vector configurations in $\R^3$ and which easily generalizes to higher dimensions, see \cite{SW:DS}) is based on how $\mathcal{F}$ and $\mathcal{F}^*$ (and hence $f$ and $f^*$) change by \emph{mutations} during a \emph{continuous motion} (this idea has a long history in discrete geometry, including in the study of dissection patterns, see, e.g., \cite{Andrzejak:1998aa}).

During a mutation, a unique triple of vectors, indexed by some $R=\{i_1,i_2,i_3\}\subset [n]$, become linearly dependent, the orientation of this triple changes, and all other triples remain linearly independent. Let $V$ and $W$ denote the vector configurations before and after the mutation. In terms of the polar dual arrangements in $S^2$, the three great circles indexed by $R$ intersect in a pair of antipodal points $u,-u\in S^2$ during the mutation. Immediately before and immediately after the mutation, these three great circles bound an antipodal pair of small triangular faces $\sigma,-\sigma$ in $\arr(V)$ and a corresponding pair $\tau,-\tau$ of triangular faces in $\arr(W)$, respectively, see Figures~\ref{fig:0k3k} and \ref{fig:1k2k}. We say that $\sigma$ and $-\sigma$ \emph{disappear} during the mutation, and that $\tau$ and $-\tau$ \emph{appear}. Let $Y \in \mathcal{F}(W)$ be the signature of $\tau$. Set $j:=|R\cap Y_-|$ and $k:=|([n]\setminus R)\cap Y_-|$; we call $(j,k)$ the \emph{type} of $\tau$. It is easy to see that $\sigma$ has type $(3-j,k)$. Analogously, $-\tau$ and $-\sigma$ are of type $(3-j,n-3-k)$ and $(j,n-3-k)$, respectively. 
\begin{figure}[ht]
\begin{center}
\includegraphics[scale=0.7]{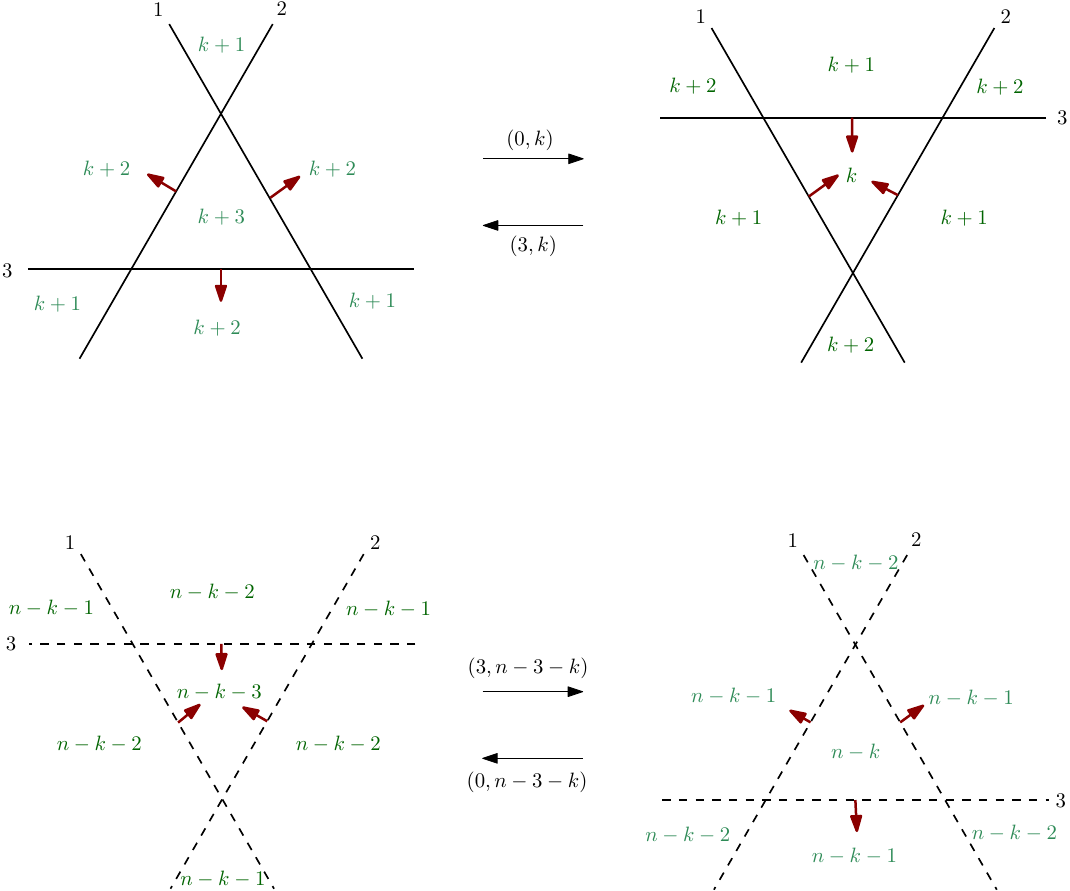}
\end{center}
\caption{A mutation between an antipodal pair of triangles $\sigma,-\sigma$ of types $(3,k)$ and $(0,n-3-k)$ (left column top and bottom) and an antipodal pair $\tau,-\tau$ of triangles of types $(0,k)$ and $(3,n-3-k)$ (right column top and bottom). The horizontal arrows are marked with the types of the appearing triangles as we move between the left and the right column. The labels in the full-dimensional cells indicate their levels, and the little arrows indicate positive hemispheres.}
\label{fig:0k3k}
\end{figure}

\begin{figure}[ht]
\begin{center}
\includegraphics[scale=0.7]{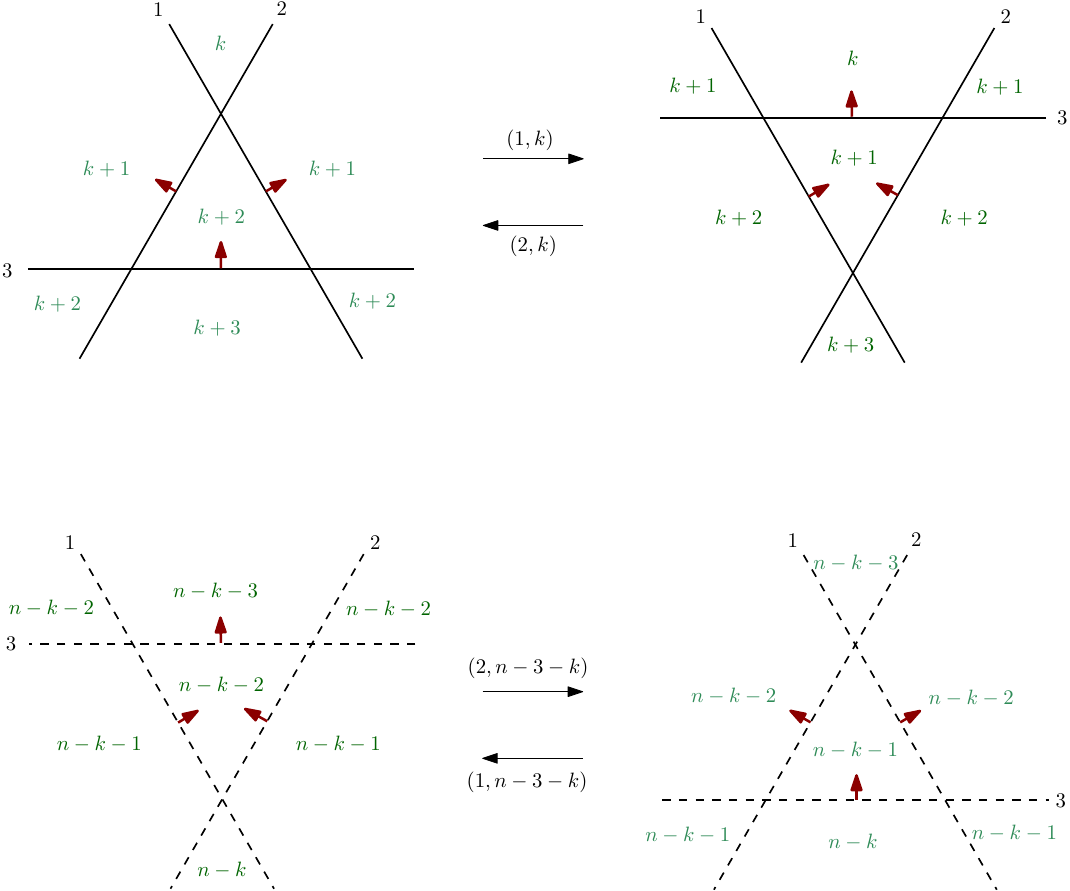}
\end{center}
\caption{A mutation between an antipodal pair of triangles $\sigma,-\sigma$ of types $(2,k)$ and $(1,n-3-k)$ (left column) and an antipodal pair $\tau,-\tau$ of triangles of types $(1,k)$ and $(2,n-3-k)$ (right column).
}
\label{fig:1k2k}
\end{figure}
Any two configurations $V=\{v_1,\dots,v_n\}$ and $W=\{w_1,\dots,w_n\}$ of $n$ vectors in general position in $\R^3$ are connected by a finite sequence of mutations. We define $g_{j,k}(V\to W)$ as the \emph{net} number of triangular faces of type $(j,k)$ that appear during this sequence (every mutation contributes $+1$, $-1$, or $0$), see Definition~\ref{def:g-pair} for the details; this depends only on $V$ and $W$, not on the sequence of mutations, see Remark~\ref{rem:g-f+DS-g}.
Analyzing how the $f$-polynomial and the $f^*$-polynomial change during a mutation (Lemmas~\ref{lem:f-mutation} and \ref{lem:fstar-mutation}) 
yields the following:

\begin{theorem} 
\label{thm:f-g}
Let $V,W \in \R^{3\times n}$ be a pair of vector configurations in general position. 

The \emph{$g$-matrix} $g(V\to W)$ of the pair is an $4\times (n-4)$-matrix  with integer entries $g_{j,k}:=g_{j,k}(V\to W)$, $0\leq j\leq 3$, $0\leq k \leq n-3$, which has the following properties:  
\begin{enumerate}
\item $g(W\to V)=-g(V\to W)$. If $U\in \R^{r\times n}$, then $g(U\to W)=g(U\to V)+g(V\to W)$. 
\item For $0\leq j\leq 3$ and $0\leq k\leq n-3$, the $g$-matrix satifies the skew-symmetries
\begin{equation}
\label{eq:g-skew}
g_{j,k}=-g_{3-j,k}=-g_{j,n-3-k}=g_{3-j,n-3-k}
\end{equation}
Thus, the $g$-matrix is determined by the submatrix $[g_{j,k}\colon 0\leq j \leq 1, 0\leq k \leq \floor{\frac{n-4}{2}}]$, which we call the \emph{small $g$-matrix}. 
Equivalently, the \emph{$g$-polynomial} $g(x,y):=g_{V\to W}(x,y):=\sum_{j,k} g_{j,k} x^j y^k \in \Z[x,y]$ satisfies
\begin{equation}
\label{eq:g-poly-skew}
\textstyle g(x,y)=-x^3g(\frac{1}{x},y)=-y^{n-3}g(x,\frac{1}{y})=x^3y^{n-3}g(\frac{1}{x},\frac{1}{y})
\end{equation}
\item The $g$-polynomial determines the difference $f(x,y):=f_W(x,y)-f_V(x,y)$ by
\begin{equation}
\label{eq:f-poly-g-poly}
f(x,y) = (1+x)^3 g (\textstyle \frac{x+y}{1+x},y)=\sum_{j=0}^3 \sum_{k=0}^{n-3} g_{j,k}\cdot (x+y)^j (1+x)^{3-j} y^k
\end{equation}
\item The $g$-polynomial determines the difference $f^*(x,y)=f^*_W(x,y)-f^*_V(x,y)$ by
\begin{equation}
\label{eq:f*-g*}
\textstyle f^*(x,y) = \sum_{j,k} \underbrace{g_{j,k}(W \to V)}_{=-g_{j,k}(V \to W)} (x+y)^k (x+1)^{n-3-k} y^j = -(x+1)^{n-3} g(y,\frac{x+y}{x+1})
\end{equation}

\end{enumerate}
\end{theorem}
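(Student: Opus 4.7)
The plan is to reduce all four assertions to a single-mutation computation and then conclude by linearity. Any two general-position configurations $V,W\in\R^{3\times n}$ are connected by a finite sequence of mutations, and, taking the path-independence of $g(V\to W)$ as granted (Remark~\ref{rem:g-f+DS-g}), both $g(V\to W)$ and the differences $f(x,y):=f_W(x,y)-f_V(x,y)$ and $f^*(x,y):=f^*_W(x,y)-f^*_V(x,y)$ are additive along such a sequence. Hence it suffices to verify everything when $V$ and $W$ differ by a single mutation. Item~1 is then immediate: reversing a mutation swaps the roles of appearing and disappearing triangles, so $g(W\to V)=-g(V\to W)$; additivity for a third configuration $U$ follows by concatenation. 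Item~2 is read off from the four-triangle picture (Figs.~\ref{fig:0k3k}, \ref{fig:1k2k}): if the appearing $\tau$ has type $(j,k)$, then $-\tau,\sigma,-\sigma$ have types $(3-j,n-3-k)$, $(3-j,k)$, $(j,n-3-k)$ respectively, so the per-mutation $g$-matrix is supported on exactly these four positions with signs $+,+,-,-$. These four entries already satisfy \eqref{eq:g-skew} as an identity between pairs, and the symmetry is preserved under summation; \eqref{eq:g-poly-skew} is just a rewriting.

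For Item~3, feeding the per-mutation $g$-contribution from Item~2 into the right-hand side of \eqref{eq:f-poly-g-poly} collapses it to
\[
\bigl[(x+y)^j(1+x)^{3-j}-(x+y)^{3-j}(1+x)^j\bigr]\bigl[y^k-y^{n-3-k}\bigr].
\]
I would then prove that the change $\Delta f(x,y)$ induced by a single mutation has exactly this form; this is the substance of Lemma~\ref{lem:f-mutation}. The full-dimensional contribution comes from $\pm\tau$ and $\pm\sigma$ directly, and the lower-dimensional contributions (three edges and three vertices per triangle, including the shared ones) can be enumerated from the signatures in Figs.~\ref{fig:0k3k}--\ref{fig:1k2k}; after all common faces cancel, the expression factors in the desired way. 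Item~4 is handled in the same fashion, starting from Lemma~\ref{lem:fstar-mutation}: one identifies which dependency patterns in $\{-1,0,+1\}^n$ appear or disappear at the instant the triple $R$ passes through linear dependency, and checks that the net polynomial change is captured by \eqref{eq:f*-g*}.

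The main obstacle is Item~4. Whereas dissection patterns change only in a local neighborhood of the mutating triangles, dependency patterns are global objects: the ones whose membership in $\mathcal{F}^*$ changes are those whose support interacts in a controlled way with $R$ (through the circuit structure of the oriented matroid), and one must track how the signs on $[n]\setminus R$ interact with the flip on $R$. Verifying that the resulting change has the factored form \eqref{eq:f*-g*}, with the exponent $3-j$ in \eqref{eq:f-poly-g-poly} replaced by $n-3-k$, is the combinatorial heart of the theorem; at a conceptual level this replacement reflects the Gale-dual roles of $R$ and $[n]\setminus R$, of sizes $3$ and $n-3$, in the oriented matroid of $V$ and its dual. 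Once the per-mutation identity is established, summing over a mutation sequence from $V$ to $W$ delivers \eqref{eq:f*-g*}.
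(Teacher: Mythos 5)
Your plan follows the paper's structure closely: reduce to a single mutation by additivity, read Items~1 and~2 directly from the four-triangle picture, and obtain Items~3 and~4 from per-mutation change formulas (Lemmas~\ref{lem:f-mutation} and~\ref{lem:fstar-mutation}). Items~1--3 match the paper's argument essentially verbatim, and your per-mutation collapse of the right-hand side of \eqref{eq:f-poly-g-poly} is exactly what is needed. The divergence is in Item~4. You propose to establish Lemma~\ref{lem:fstar-mutation} by tracking directly which dependency patterns appear or disappear when the circuit on $R$ flips, and you flag this as ``the main obstacle'' and ``the combinatorial heart of the theorem.'' The paper instead derives Lemma~\ref{lem:fstar-mutation} as a purely algebraic consequence of Lemma~\ref{lem:f-mutation}, using the substitution identity \eqref{eq:fstar-f}, which expresses $f^*_V$ as a fixed rational transform of $f_V$. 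Applying that identity to the difference polynomial from Lemma~\ref{lem:f-mutation} yields the factored form \eqref{eq:fstar-g-mutation} by direct computation, with no need to reason about circuits or about how the global set $\mathcal{F}^*$ reorganizes. The paper does note that a direct combinatorial analysis of $\mathcal{F}^*$ is a viable alternative, so your route is not wrong, but you leave its key step unproved, and your assessment of where the difficulty lies is inverted: given \eqref{eq:fstar-f} (which is part of the paper's standing framework), the combinatorial content of the theorem resides entirely in Lemma~\ref{lem:f-mutation}, and Item~4 is the formal, not the substantive, part. You would do well to prove or at least invoke \eqref{eq:fstar-f} explicitly and let it do the work, rather than reproving Gale duality at the level of individual mutations.
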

By comparing coefficients in \eqref{eq:f-poly-g-poly} and \eqref{eq:f*-g*}, we get:
\begin{corollary} 
\label{cor:f-g-fstar-gstar-coefficients}
For $0\leq s\leq 2$ and $0\leq t\leq n$,
\begin{equation}
\label{eq:f-g-coefficients}
f_{s,t}(W)-f_{s,t}(V) = \sum_{j,k} \binom{j}{t-k}\binom{3-j}{s-j+t-k} g_{j,k}(V\to W),
\end{equation}
Moreover, for $4\leq s \leq n$ and $0\leq t \leq s$,
\begin{equation}
\label{eq:fstar-g-coefficients}
f^*_{s,t}(V)-f^*_{s,t}(W) = \sum_{j,k} \binom{k}{t-j}\binom{n-3-k}{s-t+j-3}g_{j,k}(V\to W)
\end{equation}
In particular (by specializing \eqref{eq:fstar-g-coefficients} to $t=0,1$), we get that for $4\leq s \leq n$
\begin{equation}
\label{eq:fstar0-g0-coefficients}
f^*_{s,0}(V)-f^*_{s,0}(W) = \sum_{k} \binom{n-3-k}{s-3}g_{0,k}(V\to W)
\end{equation}
and
\begin{equation}
\label{eq:fstar-g1-coefficients}
\textstyle f^*_{1,0}(V)-f^*_{1,0}(W) = \sum_{k} \binom{n-3-k}{s-3}g_{1,k}(V\to W) + \sum_{k}k\binom{n-3-k}{s-4}g_{0,k}(V\to W)
\end{equation}
\end{corollary}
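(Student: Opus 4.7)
The plan is to derive all four equations of the corollary by comparing coefficients of monomials on both sides of the polynomial identities \eqref{eq:f-poly-g-poly} and \eqref{eq:f*-g*} of Theorem~\ref{thm:f-g}; the corollary is essentially a mechanical unpacking of those identities.

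For \eqref{eq:f-g-coefficients}, I would extract the coefficient of $x^s y^t$ on both sides of \eqref{eq:f-poly-g-poly}. The left-hand side yields $f_{s,t}(W)-f_{s,t}(V)$ by Definition~\ref{def:f-poly}. On the right-hand side, I would expand each factor via the binomial theorem, writing $(x+y)^j = \sum_a \binom{j}{a} x^a y^{j-a}$ and $(1+x)^{3-j}=\sum_b \binom{3-j}{b} x^b$. The monomial $x^s y^t$ from the product $(x+y)^j(1+x)^{3-j}y^k$ arises uniquely with $a=j+k-t$ and $b=s-j-k+t$, so its coefficient equals $\binom{j}{j+k-t}\binom{3-j}{s-j-k+t}$, which matches $\binom{j}{t-k}\binom{3-j}{s-j+t-k}$ after applying the symmetry $\binom{j}{a}=\binom{j}{j-a}$. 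Summing over $(j,k)$ gives \eqref{eq:f-g-coefficients}.

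For \eqref{eq:fstar-g-coefficients}, the analogous strategy applies to \eqref{eq:f*-g*}, with two small adjustments: since $f^*_V(x,y)=\sum_{s,t} f^*_{s,t}(V)\,x^{n-s}y^t$, the relevant coefficient is that of $x^{n-s}y^t$; and the sign is absorbed by rewriting $g_{j,k}(W\to V)=-g_{j,k}(V\to W)$, which flips the left-hand side to $f^*_{s,t}(V)-f^*_{s,t}(W)$. Expanding $(x+y)^k=\sum_a\binom{k}{a}x^ay^{k-a}$ and $(x+1)^{n-3-k}=\sum_b \binom{n-3-k}{b}x^b$ and matching exponents forces $a=k+j-t$ and $b=n-s-k-j+t$, so the contribution is $\binom{k}{t-j}\binom{n-3-k}{s-t+j-3}$, as stated. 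The two further identities \eqref{eq:fstar0-g0-coefficients} and \eqref{eq:fstar-g1-coefficients} are immediate specializations: at $t=0$ the factor $\binom{k}{-j}$ vanishes unless $j=0$, collapsing the double sum; at $t=1$ only $j\in\{0,1\}$ contribute, with $\binom{k}{1}=k$ and $\binom{k}{0}=1$ respectively.

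There is no genuine obstacle here; the entire content lies in Theorem~\ref{thm:f-g}, and the remaining work is purely binomial book-keeping. The only steps requiring care are tracking which variable records which statistic --- in particular that the exponent of $x$ in $f^*_V$ is $n-s$ rather than $s$ --- and the harmless symmetry $\binom{j}{a}=\binom{j}{j-a}$ used to rewrite the binomials in the form announced by the corollary.
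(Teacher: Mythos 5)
Your proposal is correct and takes exactly the approach the paper intends: the paper simply says ``by comparing coefficients in \eqref{eq:f-poly-g-poly} and \eqref{eq:f*-g*}'' and leaves the binomial expansion to the reader, and your expansion carries it out faithfully, including the crucial bookkeeping that the exponent of $x$ in $f^*_V$ is $n-s$. Note in passing that the left-hand side of \eqref{eq:fstar-g1-coefficients} contains a typo (it should read $f^*_{s,1}(V)-f^*_{s,1}(W)$, as your $t=1$ specialization shows).
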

\begin{remark} 
\label{rem:g-f+DS-g}
It is easy to show that the system of equations \eqref{eq:fstar0-g0-coefficients} can be inverted, hence the differences $f^*_{s,0}(V)-f^*_{s,0}(W)$, $4\leq s\leq n$, determine $g_{0,k}(V\to W)$, $0\leq k\leq n-3$; by the same token, the numbers $g_{1,k}(V\to W)$ are then determined by $f^*_{1,0}(V)-f^*_{1,0}(W)$, $4\leq s\leq n$, together with the already determined $g_{0,k}(V\to W)$, $0\leq k\leq n-3$. By skew-symmetry, this determines the entire $g$-matrix $g(V\to W)$, which thus only depends on the difference $f^*(V)-f^*(W)$ of $f^*$-matrices. Analogously, the $g$-matrix is determined by the difference of $f$-matrices. Thus, Theorem~\ref{thm:f-g} could be taken as a formal definition of the $g$-matrix.
Moreover, the skew-symmetry $g(x,y)=-x^r g(\frac{1}{x},y)$ reflects the Dehn--Sommerville relation \eqref{eq:DS-f-poly}, and the symmetry $g(x,y)=x^ry^{n-r}g(\frac{1}{x},\frac{1}{y})$ reflects the antipodal symmetry \eqref{eq:f-symm}.
\end{remark}

We will need the following two special configurations: 
\begin{example}[{Cyclic and Cocyclic Configurations}]
\label{ex:cyclic-cocyclic}
Let $t_1<t_2<\dots<t_n$ be real numbers and define $v_i:= (1,t_i,t_i^2) \in \R^3$. We call $\cyclic(n,3):=\{v_1,\dots v_n\}$ and $\cocyclic(n,3):=\{(-1)^i v_i\colon 1\leq i\leq n\}$ the \emph{cyclic} and \emph{cocyclic} configurations of $n$ vectors in $\R^3$.
 (the combinatorial types of these configurations are independent of the choice of the $t_i$). 
 The configuration $\cyclic(n,3)$ is pointed and corresponds to a point set in $\R^2$ in convex position (points on a parabola); it follows that $f^*_{s,0}(\cyclic(n,3))=f^*_{s,1}(\cyclic(n,3))=0$ for all $s$. Moreover, it is easy to show \cite[Lemma~3.5.1]{Matousek:2003aa} that 
$\cocyclic(n,3)$ is coneighborly.
\end{example}
We are now ready to define the $g$-matrix and the $g^*$-matrix of a vector configuration.
\begin{definition}[$g$-matrix and $g^*$-matrix]
\label{def:g-matrix}
Let $V$ be a configuration of $n$ vectors in $\mathbb{R}^3$. Set 
\[
g_{j,k} (V) := g_{j,k} (\cocyclic(n,3) \to V), \qquad\textrm{and}\qquad g^*_{j,k} (V) := g_{j,k} (V \to \cyclic(n,3))
\]
for $0\leq j\leq r$ and $0\leq k\leq n-r$. We call $g(V)=[g_{j,k}(V)]$ and $g^*(V)=[g^*_{j,k}(V)]$ the \emph{$g$-matrix} and the \emph{$g^*$-matrix} of $V$, respectively.
\end{definition}

Using skew-symmetry, Cor.~\ref{cor:f-g-fstar-gstar-coefficients}, and the properties of cyclic configurations, we get 
\begin{lemma} 
\label{lem:fstar-gstar-b1}
Let $V\in \R^{3\times n}$ be a vector configuration in general position and let $s\geq 4$. 
\[
f_{s,0}^*(V) = \sum_{k=0}^{\floor{\frac{n-4}{2}}}\underbrace{\left( \binom{n-3-k}{s-3}-\binom{k}{s-3}\right)}_{\geq 0}  g^*_{0,k}(V)
\]
and
\[
f^*_{s,\leq 1}(V) = \sum_{k=0}^{\floor{\frac{n-4}{2}}} \underbrace{\textstyle \left( \binom{n-3-k}{s-3}-\binom{k}{s-3}\right)}_{\geq 0}  g^*_{1,k}(V) +  \sum_{k=0}^{\floor{\frac{n-4}{2}}} \underbrace{\left( \binom{n-3-k}{s-4} k - \binom{k}{s-4} (n-3-k) \right)}_{\geq 0}  g^*_{0,k}(V) 
\]
\end{lemma}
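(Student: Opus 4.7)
The plan is a direct derivation from Corollary~\ref{cor:f-g-fstar-gstar-coefficients}, using the properties of the cyclic configuration and the skew-symmetry of the $g$-matrix. No new geometric input is needed beyond what is already in the preceding results.

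I would first apply \eqref{eq:fstar0-g0-coefficients} and the $t=1$ specialization of \eqref{eq:fstar-g-coefficients} to $W=\cyclic(n,3)$. By Example~\ref{ex:cyclic-cocyclic}, $\cyclic(n,3)$ is pointed (its vectors all lie in the open halfspace $x_1>0$), so $f^*_{s,0}(\cyclic(n,3))=f^*_{s,1}(\cyclic(n,3))=0$ for every $s$. Combining this with the definition $g^*_{j,k}(V)=g_{j,k}(V\to\cyclic(n,3))$ yields, after a mechanical substitution,
\[
f^*_{s,0}(V) \;=\; \sum_{k=0}^{n-3}\binom{n-3-k}{s-3}\, g^*_{0,k}(V),
\]
\[
f^*_{s,1}(V) \;=\; \sum_{k=0}^{n-3}\binom{n-3-k}{s-3}\, g^*_{1,k}(V)\;+\;\sum_{k=0}^{n-3} k\binom{n-3-k}{s-4}\, g^*_{0,k}(V).
\]

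Next I would fold each of these sums into one running over $k\in\{0,\dots,\floor{(n-4)/2}\}$ by pairing the index $k$ with $n-3-k$ and invoking the skew-symmetry $g^*_{j,k}(V)=-g^*_{j,n-3-k}(V)$ that comes from Theorem~\ref{thm:f-g}(2); when $n$ is odd, the fixed point $k=(n-3)/2$ of this pairing contributes zero by the same symmetry. Each binomial coefficient in the unfolded sum is thereby replaced by the corresponding \emph{difference} of binomials in $k$ and $n-3-k$, producing the stated expression for $f^*_{s,0}(V)$, and, after adding the two displays above and collecting terms, the expression for $f^*_{s,\leq 1}(V)$.

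Finally, I would verify the non-negativity assertions. The inequality $\binom{n-3-k}{s-3}\geq\binom{k}{s-3}$ is immediate from $n-3-k\geq k$ for $k\leq\floor{(n-4)/2}$ together with monotonicity of $\binom{\cdot}{s-3}$ in the upper index. For $k\binom{n-3-k}{s-4}\geq (n-3-k)\binom{k}{s-4}$, I would rewrite the ratio of the two sides as a product of $s-5$ factors, each of which is $\geq 1$ by the same upper bound on $k$; the small boundary cases, where one of the binomials vanishes, are dealt with by inspection. The whole argument is essentially mechanical; the only real challenge lies in the bookkeeping during the pairing step and in verifying that the coefficients combine cleanly into the displayed form.
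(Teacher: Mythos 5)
Your approach---applying Corollary~\ref{cor:f-g-fstar-gstar-coefficients} with $W=\cyclic(n,3)$, using $f^*_{s,0}(\cyclic(n,3))=f^*_{s,1}(\cyclic(n,3))=0$, and folding by the skew-symmetry $g^*_{j,k}(V)=-g^*_{j,n-3-k}(V)$---is exactly the route the paper indicates in the sentence before the lemma, and your derivation of the first identity is correct.

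The second identity is where a gap appears. Folding your second display gives
\[
f^*_{s,1}(V)=\sum_{k=0}^{\floor{\frac{n-4}{2}}}\left(\binom{n-3-k}{s-3}-\binom{k}{s-3}\right)g^*_{1,k}(V)+\sum_{k=0}^{\floor{\frac{n-4}{2}}}\left(k\binom{n-3-k}{s-4}-(n-3-k)\binom{k}{s-4}\right)g^*_{0,k}(V),
\]
so adding the first identity to obtain $f^*_{s,\le 1}=f^*_{s,0}+f^*_{s,1}$ yields the $g^*_{0,k}$-coefficient
\[
\left(\binom{n-3-k}{s-3}-\binom{k}{s-3}\right)+\left(k\binom{n-3-k}{s-4}-(n-3-k)\binom{k}{s-4}\right),
\]
not the one printed in the lemma. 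Your claim that ``adding the two displays above and collecting terms'' produces ``the stated expression'' is therefore not accurate: your (correct) computation yields a coefficient with an extra $\binom{n-3-k}{s-3}-\binom{k}{s-3}$ summand that the printed lemma omits. This is not cosmetic. The printed coefficient $k\binom{n-3-k}{s-4}-(n-3-k)\binom{k}{s-4}$ equals $2k-(n-3)<0$ for $s=4$ and $k\leq\floor{\frac{n-4}{2}}$, so the non-negativity annotation in the lemma fails there, and your own proposed verification (``a product of $s-5$ factors each $\geq 1$'') does not even make sense at $s=4$. With the extra addend the combined coefficient vanishes identically at $s=4$ and is $\geq 0$ for $s\geq 5$, which is what the $s=4$ application in the proof of Theorem~\ref{thm:crossings} actually requires. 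You should carry out the final collection of terms explicitly, report the corrected coefficient, and check the $s=4$ case separately instead of asserting agreement with the printed formula.
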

The numbers $g^*_{0,k}$, $0\leq k\leq n-3$, correspond to a Gale dual version of the $g$-vector of convex polytopes studied by Lee~\cite{Lee:1991aa} and Welzl~\cite{Welzl:2001aa}. The following is implicit in 
 \cite[Sec.~4]{Welzl:2001aa}:
 \begin{theorem} $V \subset \R^3$ be a configuration of $n$ vectors in general position. Then
 \label{thm:g-star-welzl}
\[g^*_{0,k}(V) \leq  \binom{k+2}{2}\qquad (0\leq k\leq \floor{\frac{n-4}{2}})\]
Equality holds if $V$ is coneighborly.
\end{theorem}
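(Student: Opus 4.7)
This statement is essentially a Gale-dual reformulation of the $g$-vector bound in McMullen's Upper Bound Theorem, specialized to simplicial polytopes of dimension $n-4$ with $n$ vertices (the neighborly ones of which correspond, via Gale duality, to coneighborly configurations in $\R^3$). The plan is to follow Welzl's direct combinatorial argument in~\cite{Welzl:2001aa}, which avoids the polytope detour and works intrinsically with vector configurations.

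I would first handle the equality case. All coneighborly configurations in $\R^3$ share a common $f^*$-matrix, since by Gale duality they correspond to simplicial neighborly $(n-4)$-polytopes, and all neighborly polytopes of the same dimension with the same number of vertices share a common $f$-vector. Consequently they share a common $g^*$-matrix, and it suffices to compute $g^*_{0,k}(\cocyclic(n,3)) = \binom{k+2}{2}$. This reduces to a finite combinatorial identity: either compute $f^*_{s,0}(\cocyclic)$ explicitly (these are standard quantities attached to the cyclic polytope $C(n, n-4)$) and invert the triangular system in~\eqref{eq:fstar0-g0-coefficients}, or compute $g^*_{0,k}(\cocyclic) = g_{0,k}(\cocyclic \to \cyclic)$ directly by exhibiting a specific continuous motion from $\cocyclic$ to $\cyclic$ and tallying the type-$(0,k)$ triangle appearances along the mutation sequence.

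The main content is the upper bound for arbitrary $V$. Following Welzl, I would fix a generic reference vector $u_0 \in S^2$ and use it to orient the configuration. To each ``net'' type-$(0,k)$ triangular face in $\arr(V)$---more precisely, to each net contribution in the mutation-based definition of $g^*_{0,k}(V)$---I would assign a combinatorial certificate lying in a set of cardinality $\binom{k+2}{2}$, for instance the multisets of size $k$ drawn from a fixed $3$-element index set. The assignment is designed to respect pairwise cancellation of appearing and disappearing triangles along any mutation sequence, and hence descends to the net-count interpretation of $g^*_{0,k}(V)$.

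The principal obstacle is establishing injectivity of this certificate assignment. This step is the analogue, in the arrangement/mutation setting, of the shelling argument in McMullen's original proof of the UBT for simplicial polytopes; translating it cleanly into the present framework is the main technical task. Once injectivity is in hand, the inequality $g^*_{0,k}(V) \leq \binom{k+2}{2}$ is immediate, and bijectivity of the certificate assignment in the coneighborly case (already verified above) yields equality.
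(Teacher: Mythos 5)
The paper does not give a proof of Theorem~\ref{thm:g-star-welzl}: it attributes the result to Lee and Welzl and states that it is implicit in \cite[Sec.~4]{Welzl:2001aa}. Your proposal correctly identifies the provenance (a Gale-dual reformulation of the Upper Bound Theorem, viewed through rank-$3$ configurations) and the right framework, namely Welzl's continuous-motion argument, which in the language of this paper is exactly the mutation calculus underlying the $g$-matrix. Your reduction of the equality case to a finite computation for $\cocyclic(n,3)$ is also sound, though the intermediate claim that all coneighborly configurations share a common \emph{full} $f^*$-matrix overshoots what your argument establishes: the shared $f$-vector of neighborly Gale-dual polytopes gives you only shared values of $f^*_{s,0}$, which nonetheless suffice because those alone determine the $g^*_{0,k}$'s via the inversion argument of Remark~\ref{rem:g-f+DS-g}.

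The genuine gap is in the inequality. You flag the injectivity of the certificate assignment as ``the main technical task'' and leave it open, but this is not a translation detail---it \emph{is} the theorem, the precise analogue of McMullen's shelling argument, and exactly the content of \cite[Sec.~4]{Welzl:2001aa}. A sketch that defers this step is a plan, not a proof: nothing in the proposal explains what the certificate of a net type-$(0,k)$ triangle is, why distinct net contributions receive distinct multisets, or why the assignment is stable under cancellation along an arbitrary mutation sequence. Since the paper's own contribution lies elsewhere (the $g^*_{1,k}$ bound of Theorem~\ref{thm:g1-rank3}), the appropriate resolution at this point is to cite Welzl (or Lee), as the paper does, rather than to attempt a fresh derivation of the certificate injectivity.
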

To prove Theorem~\ref{thm:GUBT-fstar} (from which Theorem~\ref{thm:crossings} follows), we need two additional new results.

\begin{lemma}
\label{lem:g-gstar-j1}
Let $V \subset \R^3$ be a configuration of $n$ vectors in general position. Then
\[g_{1,k}(V)+ g^*_{1,k}(V)=(k+1)n-3\binom{k+2}{2} \qquad (0\leq k\leq \floor{\frac{n-4}{2}})\]
\end{lemma}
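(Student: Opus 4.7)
The plan is to use the additivity of the $g$-matrix to reduce to a single universal quantity, then extract its value from one specialization of equation~\eqref{eq:f-g-coefficients}.

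First, by part~1 of Theorem~\ref{thm:f-g},
\[
g_{1,k}(V) + g^*_{1,k}(V) = g_{1,k}(\cocyclic(n,3) \to V) + g_{1,k}(V \to \cyclic(n,3)) = g_{1,k}(\cocyclic(n,3) \to \cyclic(n,3)),
\]
which depends only on $n$ and $k$. Henceforth write $g_{j,k}$ for $g_{j,k}(\cocyclic(n,3) \to \cyclic(n,3))$.

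Next, I would specialize \eqref{eq:f-g-coefficients} to $s = 2$ (counting vertices of the polar dual arrangements on $S^2$). After computing the binomial coefficients that appear and applying the skew-symmetries \eqref{eq:g-skew} ($g_{2,k} = -g_{1,k}$, $g_{3,k} = -g_{0,k}$) to collapse the $j = 2, 3$ contributions, the sum reduces to
\[
f_{2,t}(\cyclic) - f_{2,t}(\cocyclic) = 3\bigl(g_{0,t} - g_{0,t-1}\bigr) + \bigl(g_{1,t} - g_{1,t-1}\bigr).
\]

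The heart of the proof is evaluating both sides for $0 \leq t \leq \lfloor (n-4)/2 \rfloor$. On one hand, $f_{2,t}(\cyclic) = n$ for every $0 \leq t \leq n-2$: since $\cyclic(n,3)$ is pointed and radially projects to $n$ points in convex position on a parabola in $\R^2$, each unordered pair $\{i,j\}$ determines a chord splitting the remaining $n-2$ points into arcs of sizes $a$ and $n-2-a$, producing two antipodal vertices of $\arr(\cyclic)$ at levels $a$ and $n-2-a$; summing over all pairs in cyclic order yields exactly $n$ vertices at every level. On the other hand, $f_{2,t}(\cocyclic) = 0$ for $t \leq \lfloor (n-4)/2 \rfloor$: for a vertex $u \in H_i \cap H_j$, the vectors $v_i$ and $v_j$ lie on the boundary of both open halfspaces determined by $u$, so the coneighborly hypothesis applied to both halfspaces pins the level $|\{k \neq i,j : \langle v_k, u\rangle < 0\}|$ to the narrow range $[\lfloor(n-2)/2\rfloor, \lceil(n-2)/2\rceil]$, excluding every $t \leq \lfloor(n-4)/2\rfloor$ since $\lfloor(n-2)/2\rfloor = \lfloor(n-4)/2\rfloor + 1$.

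Combining the two evaluations with Theorem~\ref{thm:g-star-welzl}, which gives $g_{0,t} = \binom{t+2}{2}$ in this range (and $g_{0,-1} = 0$ by convention), so that $g_{0,t} - g_{0,t-1} = t + 1$, the displayed equation becomes
\[
g_{1,t} - g_{1,t-1} = n - 3(t+1) \qquad \bigl(0 \leq t \leq \lfloor (n-4)/2 \rfloor\bigr).
\]
Telescoping from $g_{1,-1} := 0$ yields
\[
g_{1,k} = \sum_{m=0}^{k}\bigl(n - 3(m+1)\bigr) = (k+1)n - 3\binom{k+2}{2},
\]
which is exactly the required identity.

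The main obstacle is the two evaluations of $f_{2,t}$. The cyclic count is a short convex-position exercise, but the cocyclic vanishing is what carries the geometric content: it relies on the fact that $v_i, v_j$ lie on both of the great circles bounding $u \in H_i \cap H_j$ (hence in neither of the two open halfspaces determined by $u$), so that the coneighborly hypothesis applied to each open halfspace yields matching upper and lower bounds on the level of $u$.
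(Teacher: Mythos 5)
Your proof is correct, and it takes a genuinely different and more elementary route than the paper. The paper derives Lemma~\ref{lem:g-gstar-j1} as a byproduct of Proposition~\ref{prop:lambda-g}, which introduces a lift $U$ to $S^3$, a continuous motion of $u_0$ along a great semicircle $\gamma$, and the $k$-arc counting function $\lambda_k(U,u_0)$; the Lemma falls out by adding the two halves of that proposition. That machinery is set up primarily to prove Theorem~\ref{thm:g1-rank3} (the non-negativity $g_{1,k}(V)\geq 0$), which needs the concrete interpretation $g_{1,k}=\lambda_k\geq 0$; the Lemma is essentially collateral.

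Your argument bypasses all of this. The reduction to $g_{1,k}(\cocyclic\to\cyclic)$ via additivity is exactly how the paper's Definition~\ref{def:g-matrix} makes the sum $V$-independent, and the rest is a purely algebraic extraction: specializing \eqref{eq:f-g-coefficients} at $s=2$ gives, after the skew-symmetries collapse $j=2,3$ onto $j=0,1$,
\[
f_{2,t}(\cyclic)-f_{2,t}(\cocyclic)=3\bigl(g_{0,t}-g_{0,t-1}\bigr)+\bigl(g_{1,t}-g_{1,t-1}\bigr),
\]
and I verified the binomial-coefficient bookkeeping is right (the $j=0,\ldots,3$ contributions are $3g_{0,t}$, $2g_{1,t}+g_{1,t-1}$, $g_{2,t}+2g_{2,t-1}$, $3g_{3,t-1}$, which collapse as you state). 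The two face-count evaluations are correct: $f_{2,t}(\cyclic(n,3))=n$ is the standard fact about $k$-edges of a convex polygon (the paper cites it as Lemma~\ref{lem:k-facets-neighborly}), and the vanishing $f_{2,t}(\cocyclic)=0$ for $t\leq\floor{(n-4)/2}$ follows cleanly from coneighborliness exactly as you argue, since $v_i,v_j$ lie on the bounding plane of the vertex $u\in H_i\cap H_j$, so both open sides must each contain at least $\floor{(n-2)/2}=\floor{(n-4)/2}+1$ of the remaining $n-2$ vectors. Feeding in $g_{0,t}=g^*_{0,t}(\cocyclic)=\binom{t+2}{2}$ from Theorem~\ref{thm:g-star-welzl} (equality for coneighborly configurations) and telescoping gives the claim, with no circularity since Welzl's result is independent of this Lemma.

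What each approach buys: yours isolates the Lemma with minimal geometry and is arguably the ``right'' proof of this identity alone, exhibiting it as a direct consequence of the Upper Bound Theorem for the two extremal configurations. The paper's route costs more but simultaneously delivers the much harder Theorem~\ref{thm:g1-rank3}, for which your argument gives no traction since it never produces a non-negative interpretation of $g_{1,k}$.
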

Our main result is the following:
\begin{theorem} 
\label{thm:g1-rank3}
Let $V \subset \R^3$ be a configuration of $n$ vectors in general position. Then $g_{1,k}(V) \geq 0$ for $0\leq k\leq \floor{\frac{n-4}{2}}$; equivalently (by Lemma~\ref{lem:g-gstar-j1}),
\[g^*_{1,k}(V) \leq (k+1)n-3\binom{k+2}{2} \qquad (0\leq k\leq \floor{\frac{n-4}{2}} )\]
Equality holds if $V$ is coneighborly.
\end{theorem}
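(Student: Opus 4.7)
The approach is to reinterpret $g_{1,k}(V)$ as a difference of triangle counts in the arrangement $\arr(V)$ and then derive nonnegativity by a continuous deformation argument. First I would verify that for $0\leq k\leq \lfloor(n-4)/2\rfloor$, each generic mutation changes the number $N_{1,k}(V)$ of type-$(1,k)$ triangular faces in $\arr(V)$ by exactly the local contribution to $g_{1,k}$; this is because the four triangle types $(j,k_0),(3-j,n-3-k_0),(3-j,k_0),(j,n-3-k_0)$ involved in a generic mutation are pairwise distinct in this range (as $k_0<(n-3)/2$), and the degenerate case $k_0=(n-3)/2$ contributes zero. Summing along any mutation path from $\cocyclic(n,3)$ to $V$ yields $g_{1,k}(V)=N_{1,k}(V)-N_{1,k}(\cocyclic(n,3))$. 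Because all coneighborly configurations share the same $f$-matrix (Gale-dually, all neighborly $(n-4)$-polytopes with $n$ vertices have a common $f$-vector, by Dehn--Sommerville), the claim reduces to the geometric statement that \emph{coneighborly configurations minimize $N_{1,k}$}.

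To establish this minimization, I would deform $V$ continuously into $\cocyclic(n,3)$ and analyze $N_{1,k}$ mutation by mutation. Single mutations can decrease $N_{1,k}$, so direct monotonicity fails; instead the plan is to find an auxiliary nonnegative quantity $\Phi(V)$ --- built out of type-$(0,k')$ counts $N_{0,k'}(V)$ for $k'\leq k$, whose collective behavior is already governed by the $j=0$ case (Theorem~\ref{thm:g-star-welzl}) --- such that $N_{1,k}(V)+\Phi(V)$ is monotone non-decreasing as $V$ moves toward $\cocyclic(n,3)$, while $\Phi$ remains bounded by the value it attains on coneighborly configurations. The skew-symmetries of the $g$-matrix (Theorem~\ref{thm:f-g}(2)) and the antipodal symmetry of $\arr(V)$ should roughly halve the casework by pairing antipodal faces.

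The hard part is identifying the right $\Phi$ and proving the corresponding local inequality at each mutation. Type-$(1,k)$ triangles are more intricate than type-$(0,k)$ because they carry a distinguished bounding great circle (the one marked negative), and tracking how this distinction migrates during a mutation depends delicately on the mutation's own type $(j,k_0)$. A systematic case analysis over mutation types, combined with the algebraic identity of Lemma~\ref{lem:g-gstar-j1} linking $g_{1,k}$ and $g^*_{1,k}$ to the already-controlled $g^*_{0,k}$, appears to be unavoidable. Once the global inequality is proven, the equality case for coneighborly $V$ follows automatically from Remark~\ref{rem:g-f+DS-g}: the $g$-matrix depends only on the $f^*$-matrix, and all coneighborly configurations share the same $f^*$-matrix.
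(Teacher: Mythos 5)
The identification at the heart of your plan is false. You claim that each mutation changes the number $N_{1,k}(V)$ of type-$(1,k)$ triangular cells of $\arr(V)$ by exactly the local contribution to $g_{1,k}$, so that $g_{1,k}(V)=N_{1,k}(V)-N_{1,k}(\cocyclic(n,3))$. But a mutation does \emph{not} only create/destroy the four cells $\sigma,-\sigma,\tau,-\tau$; it also alters the edge count of the twelve surrounding cells. Concretely, if $A_1$ is the cell sharing the edge of $\sigma$ on $H_{i_1}$, then after the mutation $A_1$ retains its signature but shares only the vertex $H_{i_2}\cap H_{i_3}$ with $\tau$: its two bounding edges on $H_{i_2},H_{i_3}$ extend and meet at this vertex, so $A_1$ has one fewer edge. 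Dually, the cell $B_{23}$ that shared only the vertex $H_{i_2}\cap H_{i_3}$ with $\sigma$ acquires an edge on $H_{i_1}$ and gains an edge. Thus a quadrilateral can become a triangle (or a triangle a lune) without being $\sigma$ or $\tau$, and $N_{j,k}(W)-N_{j,k}(V)$ for a single mutation is \emph{not} the $g$-matrix entry. The $g$-matrix is a formal bookkeeping device determined by the $f^*$-matrix (Remark~\ref{rem:g-f+DS-g}); it is not a count of triangular cells, so the reduction of the theorem to ``coneighborly configurations minimize $N_{1,k}$'' does not hold. (The premise you invoke, that all coneighborly configurations share the same full $f$-matrix, is also not something you can assume: Dehn--Sommerville and neighborliness fix the $0$-level face numbers $f_{s,0}$, not the face numbers at higher levels, and in fact that all coneighborly configurations share the same $f^*$-matrix is a \emph{consequence} of Theorems~\ref{thm:g-star-welzl} and~\ref{thm:g1-rank3}, so using it here would be circular.)

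Beyond this, the outline is openly incomplete: you posit an auxiliary quantity $\Phi$ that would make $N_{1,k}+\Phi$ monotone under mutations, but you neither construct it nor give a reason it should exist --- the ``hard part'' you defer is the whole theorem. The paper's proof avoids these difficulties by leaving $S^2$ entirely. It lifts $V$ to a configuration $U\subset S^3$ (the tips of $V$ placed on a cylinder, then radially projected to $S^3$), chooses a great semicircle $\gamma$ parameterizing the position of the origin $u_0$, and introduces \emph{$k$-arcs}: maximal connected unions of sublevel-$(\leq k)$ edges of $\mathcal{A}(U)$ along the great circles $H_i\cap H_j$ in $S^3$. Lemma~\ref{lem:transitions} shows that as $u_0$ moves along $\gamma$, the change in $g_{1,k}(U/u_0)$ tracks precisely the change in $\lambda_k(U,u_0)$, the number of $k$-arcs contained in $H_0^-$, and Proposition~\ref{prop:lambda-g} then gives $g_{1,k}(U/u_0)=\lambda_k(U,u_0)\geq 0$, with $\lambda_k=0$ in the coneighborly case. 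This one-dimension-up reformulation exhibits $g_{1,k}$ directly as a nonnegative count and is what makes the positivity provable without the case analysis you anticipate.
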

\begin{proof}[Proof of Theorem~\ref{thm:GUBT-fstar}]
By Lemma~\ref{lem:fstar-gstar-b1}, for $s\geq 4$, $f_{s,0}^*(V)$ and $f_{s,\leq 1}^*(V)$ are non-negative linear combinations of the numbers $g^*_{0,k}(V)$ and $g^*_{1,k}(V)$, $0\leq k \leq \floor{\frac{n-4}{2}}$. Hence, Theorem~\ref{thm:GUBT-fstar} follows from Theorems~\ref{thm:g-star-welzl} and \ref{thm:g1-rank3}. 
\end{proof}
\begin{remark} The definition of the $g$-matrix generalizes to vector configurations $V\subset \R^r$, for arbitrary $r=d+1 \geq 1$, see the paper \cite{SW:DS} by Streltsova and Wagner, who use this to determine all linear relations between face numbers of levels in arrangements in $S^d$. We conjecture that the small $g$-matrix of any vector configuration in $\R^r$ is nonnegative. This would generalize the \emph{Generalized Lower Bound Theorem} for polytopes and imply the Eckhoff--Linhart--Welzl conjecture in full generality.
\end{remark}

\section{Bounding the Spherical Arc Crossing Number}
\label{sec:proof-crossing}
\begin{observation}
\label{obs:crossings-lin-dep}
Let $V \subset \R^3$ be a configuration of $n$ vectors in general position. Then
\[
f^*_{4,0}(V) + f^*_{4,1}(V) + \frac{1}{2} f^*_{4,2}(V) = \binom{n}{4}
\]
If all vectors in $V$ have unit length (which we can achieve by positive rescaling) then the number of crossings in the induced spherical arc drawing of $K_n$ equals
$\cross(V)= \frac{1}{2}f^*_{4,2}(V)$.
\end{observation}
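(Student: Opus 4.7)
The plan is to prove both identities of Observation~\ref{obs:crossings-lin-dep} by counting $4$-subsets of $V$ classified according to the sign pattern of their (essentially unique) linear dependency.

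For the first identity $f^*_{4,0}(V)+f^*_{4,1}(V)+\tfrac{1}{2}f^*_{4,2}(V)=\binom{n}{4}$, I would start from the observation that any four vectors in general position in $\R^3$ span $\R^3$, so the space of linear dependencies among them is exactly one-dimensional. The associated sign patterns therefore form a single antipodal pair $\{F,-F\}\subset\{-1,+1\}^4$, with no zero entries (a zero would mean three of the four vectors are linearly dependent, contradicting general position). Thus each $4$-subset contributes exactly two patterns to the total $\sum_{t=0}^{4}f^*_{4,t}(V)$, giving $\sum_{t=0}^{4}f^*_{4,t}(V)=2\binom{n}{4}$. The involution $F\mapsto -F$ preserves $\mathcal{F}^*(V)$ and sends $|F_-|=t$ to $|F_-|=4-t$, so $f^*_{4,t}(V)=f^*_{4,4-t}(V)$; the identity then follows by collecting terms and dividing by two.

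For the crossing identity, the plan is first to record a geometric lemma and then to sort $4$-subsets into types. The lemma is that for non-antipodal unit vectors $a,b\in S^2$, the shorter geodesic arc joining them coincides with the set of unit vectors in the positive cone $\cone(a,b)=\{\alpha a+\beta b\mid \alpha,\beta\geq 0\}$; this is a direct parameterization of the great-circle geodesic. Consequently, for four vectors $v_{i_1},\dots,v_{i_4}$ in general position and a matching into two pairs $\{\{i_1,i_2\},\{i_3,i_4\}\}$, the arcs connecting $v_{i_1},v_{i_2}$ and $v_{i_3},v_{i_4}$ cross in the interior if and only if $\cone(v_{i_1},v_{i_2})\cap \cone(v_{i_3},v_{i_4})$ contains a ray, which (by general position on both sides) is equivalent to the existence of a relation $\alpha_1 v_{i_1}+\alpha_2 v_{i_2}-\alpha_3 v_{i_3}-\alpha_4 v_{i_4}=0$ with all $\alpha_j>0$, i.e., to the unique dependency having signs $+1$ on $\{i_1,i_2\}$ and $-1$ on $\{i_3,i_4\}$.

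From this characterization the count is direct. If the dependency of a $4$-subset has sign distribution of type $(0,4)$ or $(1,3)$ (i.e., all signs equal, or a $3$-$1$ split), then for every one of the three matchings the "two-against-two" condition fails, so the $4$-subset contributes $0$ crossings. If it has type $(2,2)$, then exactly one matching realizes the correct $+1/-1$ split, contributing exactly one crossing; meanwhile the antipodal pair $\{F,-F\}$ of patterns both lie in $\mathcal{F}^*(V)$ with $|F_-|=2$, contributing $2$ to $f^*_{4,2}(V)$. Summing over all $4$-subsets therefore yields $\cross(V)=\tfrac{1}{2}f^*_{4,2}(V)$, and combining with the first identity gives the stated formula. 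The only nontrivial ingredient is the geometric description of the spherical arc in terms of the positive cone; once this is in hand, the argument is purely combinatorial, so I do not anticipate any real obstacle.
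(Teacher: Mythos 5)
Your proof is correct and follows essentially the same approach as the paper: classify each $4$-subset by the sign pattern of its one-dimensional space of linear dependencies, derive the first identity from the fact that each $4$-subset contributes the antipodal pair $\{F,-F\}$, and observe that only quadruples with a $(2,2)$-split give a crossing. The one place you go beyond the paper is the cone-based lemma characterizing the shorter geodesic arc; the paper simply asserts (with a figure) that Type~2 quadruples give exactly one crossing, so your argument makes that step fully explicit.
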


\begin{proof}
By general position, for every quadruple $W=\{w_1,w_2,w_3,w_4\}\subset V$,  the space of linear dependencies of $W$ has dimension $1$, and hence $\mathcal{F}^*(W)$ consists of exactly two sign vectors $\{F,-F\}$. Thus, there are exactly three combinatorial types of quadruples (see Fig.~\ref{fig:three-types}):
\begin{enumerate}[\textbf{Type~\arabic{enumi}:}]
\setcounter{enumi}{-1}
\item $f^*_{4,0}(W)=1$ (geometrically, $W$ forms the vertex set of a tetrahedron containing the origin in its interior);
\item $f^*_{4,1}(W)=1$ ($W$ spans a pointed cone with three extremal rays that contains the fourth vector in its interior); 
\item $f^*_{4,2}(W)=2$ ($W$ spans a pointed cone with four extremal rays).
\end{enumerate} 
Every quadruple of Type~2 contributes exactly one crossing, other types do not.
\begin{figure}
\begin{center}
\includegraphics[scale=0.7]{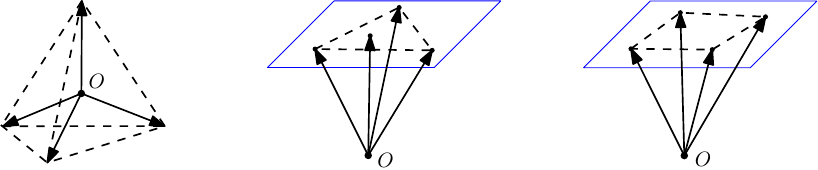}
\end{center}
\caption{The three combinatorial types of four vectors in general position in $\R^3$.\label{fig:three-types}}
\end{figure}
\end{proof}
\begin{proof}[Proof of Theorem~\ref{thm:crossings}]
By Observation~\ref{obs:crossings-lin-dep}, Theorem~\ref{thm:crossings} is equivalent to the statement that 
$f^*_{4,2}(V) \geq 2X(n)$, equivalently, $f^*_{4,0}(V)+f^*_{4,1}(V) \leq \binom{n}{4}-X(n)$, where equality holds in both bounds if $V$ is coneighborly. By the special case $s=4$ of Theorem~\ref{thm:GUBT-fstar},
\begin{equation}
\label{eq:f22-star-Xn}
f^*_{4,\leq 1}(V) \leq \sum_{k=0}^{\floor{\frac{n-4}{2}}} (n-3-2k)\left((k+1)n-3\binom{k+2}{2}\right)=:Y(n),
\end{equation}
with equality if $V$ is coneighborly. A straightforward calculation (see Appendix~\ref{sec:appendix-calculation}) shows that $Y(n)=\binom{n}{4}-X(n)$.
\end{proof}

\begin{remark} 
\label{rem:Wendel}
We mention a connection to geometric probability. Let $\mu$ be a probability distribution on $\R^3$; we assume that $\mu$ is non-degenerate  in the sense that every plane through the origin has $\mu$-measure zero. A beautiful argument due to Wendel~\cite{Wendel:1962aa} shows that if $\mu$ is \emph{centrally symmetric} and $W=\{w_1,w_2,w_3,w_4\} \subset \R^3$ is a set of four independent $\mu$-random vectors then the probability that $W$ is of Type~$i \in \{0,1,2\}$ equals $q_i$, where  $q_0=\frac{\binom{4}{0}+\binom{4}{4}}{2^4}=\frac{1}{8}$, $q_1=\frac{\binom{4}{1}+\binom{4}{3}}{2^4}=\frac{1}{2}$, and $q_2=\frac{\binom{4}{2}}{2^4}=\frac{3}{8}$;
 it follows that for any set $V\subset \R^3$ of $n$ independent $\mu$-random vectors, the expected number of quadruples of type $i$ equals $q_i \binom{n}{4}$.
 In particular, if the vectors in $V$ are chosen independently and uniformly at random from $S^2$ then the expected number of crossings in the spherical arc drawing given by $V$ is $\frac{3}{8}\binom{n}{4}$, which was also independently shown by Moon~\cite{Moon:1965aa}. Theorem~\ref{thm:crossings}, together with a well-known limiting argument \cite{Scheinerman:1994aa} implies that for an arbitrary (not necessarily centrally symmetric) non-degenerate probability distribution $\mu$ on $S^2$, the expected number of crossings in the spherical arc drawing given by $n$ independent 
 $\mu$-random points is \emph{at least} $\frac{3}{8}\binom{n}{4}$. 
\end{remark}

\section{Gale Duality and Exact Upper Bounds for Sublevels}
\label{sec:Gale}

The definitions of the dissection patterns $\mathcal{F}(V)$, the dependency patterns $\mathcal{F}^*(V)$, the matrices $f(V)$ and $f^*(V)$, and the polynomials $f_V(x,y)$ and $f^*_V(x,y)$ readily generalize to vector configurations $V=\{v_1,\dots,v_n\} \subset \R^r$ in general position and the polar dual spherical arrangements $\arr(V)$ in $S^d$, for $r\geq 1$ and $d=r-1$. In general, 
$f_{s,t}(V)$ counts the $(d-s)$-faces of level $t$ in the simple arrangement $\arr(V)$, and $f^*_{s,t}(V)$ counts the number of linear dependency sign patterns with $t$ negative entries $-1$ and $s$ non-zero entries $\pm 1$.

It is well-known that $\mathcal{F}(V)$ and $\mathcal{F}^*(V)$ determine each other; moreover, the polynomials $f_V(x,y)$ and $f^*_V(x,y)$ also determine each other \cite[Theorem~23]{SW:DS} via the identity
\begin{equation}
\label{eq:fstar-f}
f^*_V(x,y)=(x+y+1)^n - (-1)^r x^n -(x+1)^n f_V(\textstyle -\frac{x}{x+1},\frac{x+y}{x+1})
\end{equation}

 \begin{definition}[Gale Duality] 
\label{def:Gale}
Two vector configurations $V \in \R^{r\times n}$ and $W\in \R^{(n-r)\times n}$ are called \emph{Gale duals} of one another if the rows of $V$ and the rows of $W$ span subspaces of $\R^n$ that are orthogonal complements of one another. Since we always assume that $V$ and $W$ are in general position and of full rank, this is equivalent to the condition $VW^{\top} = 0$.
\end{definition}

Gale dual configurations determine each other up to linear isomorphisms of their ambient spaces $\R^r$ and $\R^{n-r}$, respectively. Thus, we 
speak of \emph{the} Gale dual of $V$, which we denote by $V^*$. Obviously, $(V^*)^*=V$.
Moreover, it follows straightforwardly from the definition that
\[\mathcal{F}^{*}(V)=\mathcal{F}(V^*),\qquad \textrm{hence} \quad f^*_V(x,y)=f_{V^*}(x,y) \quad \textrm{and}\quad f^*_{s,t}(V)=f_{n-s,t}(V^*)\]

The construction from Example~\ref{ex:cyclic-cocyclic} generalizes to yield cyclic configurations $\cyclic(n,r)$ and cocyclic configurations $\cocyclic(n,r)$ for arbitrary $n\geq r\geq 1$ \cite{Ziegler:1995aa,Matousek:2003aa}. These are examples of neighborly and coneighborly configurations, which we define next.

Let $V=\{v_1,\dots,v_n\} \subseteq \R^r$ be a vector configuration in general position. A subset $W\subseteq V$ is \emph{extremal} if there exists an oriented linear hyperplane $H$ that contains all vectors in $W$ and such that one of the two open halfspaces bounded by $H$ contains $V\setminus W$.

\begin{definition}[{Neighborly and Coneighborly Configurations}] A vector configuration 
$V=\{v_1,\dots,v_n\} \subseteq \R^r$ is \emph{coneighborly} if every open linear halfspace contains at least $\lfloor \frac{n-r+1}{2} \rfloor$ vectors of $V$. 
It is \emph{neighborly} if every subset $W \subseteq V$ of size $|W| \leq \lfloor \frac{r-1}{2} \rfloor$ is \emph{extremal}.
\end{definition}
Every neighborly vector configuration $V \subset \R^r$ is pointed, hence corresponds to a point set $S\subset \R^d$, $d=r-1$, and $V$ being neighborly means the simplicial $d$-polytope $\conv(S)$ is a \emph{neighborly polytope}, i.e., every subset of $S$ of size at most $\floor{\frac{d}{2}}$ forms a face. 
We note that for $r=1,2$ ($d=0,1$) neighborliness is the same as being pointed, and for $r=3,4$ ($d=2,3$) $V$ is neighborly iff the point set $S$ is in convex position. By a celebrated result of McMullen~\cite{McMullen:1970aa} neighborly polytopes maximize the number of faces of any dimension:

\begin{theorem}[Upper Bound Theorem for Convex Polytopes] 
\label{thm:UBT}
Let $V \subset \R^r$ be a configuration of $n$ vectors in general position. Then
\[
f_{s,0}(V) \leq f_{s,0}(\cyclic(n,r)) \qquad (0 \leq s \leq d=r-1)
\]
with equality if $V$ is neighborly.
\end{theorem}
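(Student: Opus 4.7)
The plan is to reduce to the Gale-dual reformulation and invoke a higher-rank version of the $g^*$-matrix machinery developed above. By Gale duality, $f_{s,0}(V)=f^*_{n-s,0}(V^*)$; a standard calculation \cite{Matousek:2003aa} identifies the Gale transform of $\cyclic(n,r)$ with $\cocyclic(n,n-r)$ up to a linear isomorphism of $\R^{n-r}$; and $V$ is neighborly iff its Gale dual $V^*$ is coneighborly. Setting $r':=n-r$ and $s':=n-s$, the theorem is equivalent to the statement that for every configuration $W\subset \R^{r'}$ of $n$ vectors in general position,
\[
f^*_{s',0}(W) \leq f^*_{s',0}(\cocyclic(n,r')),
\]
with equality when $W$ is coneighborly.

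Next, I would generalize Lemma~\ref{lem:fstar-gstar-b1} from rank $3$ to arbitrary rank $r'$. The mutation analysis sketched before Theorem~\ref{thm:f-g} extends directly (as worked out in the companion paper~\cite{SW:DS}) to produce a $g$-matrix for any pair of rank-$r'$ vector configurations, satisfying the higher-rank analogues of the skew-symmetries~\eqref{eq:g-skew} and of the coefficient identity~\eqref{eq:fstar0-g0-coefficients}. Combining them yields
\[
f^*_{s',0}(W) = \sum_{k=0}^{\lfloor (n-r'-1)/2 \rfloor} \left( \binom{n-r'-k}{s'-r'} - \binom{k}{s'-r'} \right) g^*_{0,k}(W),
\]
where the coefficients are non-negative throughout the summation range.

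Finally, I would invoke the direct argument of Welzl~\cite[Sec.~4]{Welzl:2001aa} that
\[
g^*_{0,k}(W) \leq \binom{k+r'-1}{r'-1} \qquad \left(0 \leq k \leq \lfloor (n-r'-1)/2 \rfloor\right)
\]
for every $W \subset \R^{r'}$ in general position, with equality when $W$ is coneighborly; the right-hand side equals $g^*_{0,k}(\cocyclic(n,r'))$. Substituting into the previous identity yields the required inequality and characterization of equality.

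The hard part is the last step: the bound on $g^*_{0,k}(W)$ itself. Welzl's argument is a continuous-motion proof that tracks how the ``extremal'' dependency patterns evolve along a generic deformation of $W$ and bounds the net effect of each mutation via a matroid-type inequality. Alternatively, when $V$ is pointed (so that $V^*$ is ``cobounded''), the numbers $g^*_{0,k}(V^*)$ can be identified with the classical $g$-numbers of the simple $d$-polytope polar to $\conv(S)$, and the bound reduces to McMullen's shelling inequality~\cite{McMullen:1970aa}; a standard perturbation/limit argument then handles the general (non-pointed) case. Either route requires genuinely new input beyond the purely formal properties of the $g$-matrix established in the excerpt.
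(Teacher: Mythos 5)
The paper does not prove Theorem~\ref{thm:UBT}: it is stated as McMullen's classical Upper Bound Theorem, with a citation to \cite{McMullen:1970aa} and a pointer (just before Sec.~\ref{subsec:crossing}) to Welzl's direct proof \cite{Welzl:2001aa} of the Gale-dual $f^*_{s,0}$ formulation. So there is no in-paper argument to compare your proposal against.

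That said, your reduction is the correct one and is the route the paper itself gestures at. The chain $f_{s,0}(V)=f^*_{n-s,0}(V^*)$, the neighborly/coneighborly Gale duality, the higher-rank analogue of Lemma~\ref{lem:fstar-gstar-b1} expressing $f^*_{s',0}(W)$ as a non-negative combination of $g^*_{0,k}(W)$, and the Welzl bound $g^*_{0,k}(W)\le\binom{k+r'-1}{r'-1}$ (with equality for coneighborly $W$, so the specific identification of $\cyclic(n,r)^*$ with $\cocyclic(n,n-r)$ is not even strictly necessary for the equality case) are all consistent with the framework developed in Sections~\ref{sec:g-matrix-intro}--\ref{sec:g-matrix}. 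But as you yourself flag, what you have written is an \emph{outline}, not a proof: the two load-bearing ingredients --- the rank-$r'$ generalization of the $g$-matrix identities, which this paper establishes only for rank $3$ and otherwise delegates to \cite{SW:DS}, and the inequality $g^*_{0,k}(W)\le\binom{k+r'-1}{r'-1}$ itself --- are imported from \cite{Welzl:2001aa} (or, via polarity and the pointed/perturbation reduction, from McMullen's shelling argument \cite{McMullen:1970aa}) rather than proved. Since the theorem being ``proved'' is precisely McMullen's UBT, invoking either of these sources wholesale makes the argument a reformulation rather than an independent derivation. If what is wanted is a genuine proof, the substance of Welzl's continuous-motion argument for the $g^*_{0,k}$ bound has to be supplied; if a citation suffices, then the entire reduction is unnecessary, since one can cite McMullen directly, as the paper does.
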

Eckhoff \cite[Conj.~9.8]{Eckhoff:1993aa}, Linhart~\cite{Linhart:1994aa}, and Welzl~\cite{Welzl:2001aa}, independently of one another (and in slightly different forms) conjectured a far-reaching generalization of Theorem~\ref{thm:UBT} for sublevels of arrangements. Let $f_{s,\leq k}(V):=\sum_{t\leq k}f_{s,t}(V)$.

\begin{conjecture}[Generalized Upper Bound Conjecture for Sublevels]
\label{GUBC} 
Let $V \subset \R^r$ be a configuration of $n$ vectors in general position, and $0\leq k\leq \floor{\frac{n-r-1}{2}}$. Then 
\[f_{s,\leq k}(V) \leq f_{s,\leq k}(\cyclic(n,r)) \qquad (0 \leq s \leq d=r-1)\]
with equality if $V$ is neighborly.
\end{conjecture}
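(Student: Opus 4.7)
The plan is to generalize the $g$-matrix machinery developed in the paper from rank $3$ to arbitrary rank $r$, following the program indicated in the final remark of Section~\ref{sec:g-matrix-intro}. The mutation-based definition of $g(V \to W)$ carries over verbatim: during each mutation some $r$-subset $R \subset [n]$ of vectors becomes linearly dependent, an antipodal pair of simplicial $(r-1)$-faces in $\arr(V)$ is replaced by another antipodal pair in $\arr(W)$, and one tracks the signed count $g_{j,k}$ of appearing cells of type $(j,k)$ for $0 \leq j \leq r$ and $0 \leq k \leq n-r$, where $j = |R \cap Y_-|$ and $k = |([n]\setminus R) \cap Y_-|$ for the signature $Y$ of the appearing cell. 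The skew-symmetries $g_{j,k} = -g_{r-j,k} = -g_{j,n-r-k} = g_{r-j,n-r-k}$ persist by the same sign-flip argument, concentrating attention on the \emph{small} $g$-matrix indexed by $0 \leq j \leq \lfloor r/2 \rfloor$ and $0 \leq k \leq \lfloor (n-r-1)/2 \rfloor$.

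Next, I would establish the rank-$r$ analog of Theorem~\ref{thm:f-g}: compute how $f_V(x,y)$ changes under a single mutation (generalizing Lemmas~\ref{lem:f-mutation} and~\ref{lem:fstar-mutation}) and integrate to obtain a closed form generalizing \eqref{eq:f-poly-g-poly}. Extracting coefficients should give
$$f_{s,\leq k}(W) - f_{s,\leq k}(\cyclic(n,r)) \;=\; \sum_{j,\ell} \alpha^{(n,r)}_{s,k;j,\ell}\, g_{j,\ell}\bigl(\cyclic(n,r) \to W\bigr),$$
with non-negative coefficients $\alpha^{(n,r)}_{s,k;j,\ell}$ in the relevant range $k \leq \lfloor (n-r-1)/2 \rfloor$ and $j,\ell$ small. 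This is exactly the higher-rank form of Lemma~\ref{lem:fstar-gstar-b1}, and I would verify it by a direct binomial identity of the type producing $\binom{n-3-k}{s-3}-\binom{k}{s-3} \geq 0$. Via the skew-symmetries and the combinatorics of the cyclic/cocyclic configurations, Conjecture~\ref{GUBC} is then reduced to the single statement that the small $g$-matrix $g_{j,\ell}(\cocyclic(n,r) \to W) \geq 0$ for every vector configuration $W \subset \R^r$ in general position.

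The main obstacle is this non-negativity, and its difficulty stratifies by $j$. The row $j=0$ is the Gale-dual $g$-vector of Lee--Welzl, and Theorem~\ref{thm:g-star-welzl} generalizes to arbitrary $r$ as a consequence of the Generalized Lower Bound Theorem for simplicial polytopes. For $j \geq 1$, however, there is no polytopal shadow: already at rank $3$ the case $j=1$ required the dedicated Theorem~\ref{thm:g1-rank3}, combining the arithmetic identity in Lemma~\ref{lem:g-gstar-j1} with genuinely two-dimensional sweep arguments in $S^2$. I would attack the general case by induction on $r$ using link and deletion reductions, exploiting that the link of a vector $v_i \in V$ in $\arr(V)$ is naturally a rank-$(r-1)$ configuration, and that a double count of appearing cells across mutations should express rows of the small $g$-matrix of $V$ as weighted sums of small $g$-matrices of links. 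The hard part will be to isolate the correct higher-rank analog of Lemma~\ref{lem:g-gstar-j1} — a linear identity tying $g_{j,k}$ and $g^*_{j,k}$ together — so that the inductive bound on links transfers to the global $g$-matrix; absent such an identity, direct mutation monotonicity alone does not appear to close the induction, and this is where I expect the bulk of the work to lie.
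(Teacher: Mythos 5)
The statement you selected is labeled a \emph{conjecture}, and the paper does not prove it. The only case settled in the paper is $n = r+3$ (equivalently $d = n-4$), via Theorem~\ref{thm:GUBT-fstar} and Gale duality, as the authors state at the end of Section~\ref{sec:Gale}. Your reduction of Conjecture~\ref{GUBC} to the non-negativity of the small $g$-matrix of an arbitrary vector configuration in $\R^r$ is exactly the program the authors themselves sketch in the remark closing Section~\ref{sec:g-matrix-intro} --- and there they explicitly flag this non-negativity as an open conjecture, not a result, noting it would generalize the Generalized Lower Bound Theorem and imply the Eckhoff--Linhart--Welzl conjecture in full.

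The genuine gap is the non-negativity of the rows $g_{j,k}$ for $1 \leq j \leq \lfloor r/2 \rfloor$ when $r > 3$, and nothing in your proposal closes it. In rank $3$, the single needed case $j=1$ (Theorem~\ref{thm:g1-rank3}) is proved by a bespoke argument: lift $V = S - o$ to a neighborly rank-$4$ configuration $U$ supported on a cylinder, move $u_0$ along a great semicircle $\gamma \subset S^3$, rule out transitions of type $(2,k)$ and $(3,k)$ (Lemma~\ref{lem:transitions}) using a special extremality property of the lift (Lemma~\ref{lem:special}(3)), and then identify $g_{1,k}(U/u_0)$ with the manifestly non-negative count $\lambda_k(U,u_0)$ of $k$-arcs contained in $H_0^-$. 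Your proposed route --- induction on $r$ via links and deletions hinging on a yet-unspecified higher-rank analog of Lemma~\ref{lem:g-gstar-j1} --- is a different and undeveloped strategy; you yourself concede that the needed linear identity is missing and that without it the induction does not close. As written this is a research plan, not a proof, and Conjecture~\ref{GUBC} remains open for $n > r + 3$.
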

A random sampling argument due to Clarkson (see~\cite{Clarkson:1989aa}) shows that Conjecture~\ref{GUBC} is true \emph{asymptotically}, for fixed $r$ and $n,k\to \infty$, up to a constant factor depending on $r$. For the case $s=d$ of vertices at sublevel $(\leq k)$, Conjecture~\ref{GUBC} was shown to be true exactly (without constant factors) by Peck \cite{Peck:1985aa} and by Alon and Gy\H{o}ri~\cite{Alon:1986aa} for $r\leq 3$ and by Welzl~\cite{Welzl:2001aa} for \emph{pointed} vector configurations in rank $r=4$.  Wagner~\cite{Wagner:2006aa} proved that it is true up to a factor of $4$ for arbitrary rank $r$. Explicit formulas for the numbers $f_{s,t}(\cyclic(n,r))$ 
have been obtained by Andrzejak and Welzl~\cite{Andrzejak:2003aa}. 

By Gale duality, Theorem~\ref{thm:GUBT-fstar} confirms Conjecture~\ref{GUBC} for $n=r+3=d+4$. 

\begin{remark} Bounding the maximum number of faces at level \emph{exactly} $k$ is of a rather different flavor. For coneighborly configurations, all $2\binom{n}{r-1}$ vertices of the dual arrangement are concentrated at one or two consecutive levels $k=\floor{\frac{n-r+1}{2}}$ and $k=\ceil{\frac{n-r+1}{2}}$. By contrast, determining the maximum number $f_{d,k}$ of vertices at level $k$ for \emph{pointed} vector configurations in $\R^r$ is a difficult open problem, first studied by Lov\'asz~\cite{Lovasz:1971aa} and Erd\H{o}s, Lov\'asz, Simmons, and Straus~\cite{Erdos:1973aa} in the 1970s (see \cite{Wagner:2008aa} or \cite[Ch.~11]{Matousek:2002aa} for more details and background); even for $r=3$ (i.e., $d=2$) there remains a big gap between the best upper and lower bounds to date, which are $O(nk^{1/3})$ (due to Dey~\cite{Dey:1998aa}) and $ne^{\Omega(\sqrt{\log k})}$ (due to T\'oth~\cite{Toth:2001aa}), respectively.
\end{remark}

\section{Continuous Motion and the $g$-Matrix}
\label{sec:g-matrix}

Any two configurations $V=\{v_1,\dots,v_n\}$ and $W=\{w_1,\dots,w_n\}$ of $n$ vectors in general position in $\R^3$ can be deformed into one another through a continuous family $V(t)=\{v_1(t),\dots,v_n(t)\}$ of vector configurations, where $v_i(t)$ describes a continuous path from $v_i(0)=v_i$ to $v_i(1)=w_i$ in $\R^3$. If we choose this continuous motion sufficiently generically, then there be a finite set of events $0 < t_1 < \dots < t_N < 1$
during which the combinatorial type of $V(t)$ (encoded by $\mathcal{F}(V(t))$) changes in a controlled way, by a \emph{mutation}, as described in Section~\ref{sec:g-matrix-intro}. Thus, any two vector configurations are connected by a finite sequence $V=V_0,V_1,\dots,V_N=W$ such that $V_{s-1}$ and $V_s$ differ by a mutation, $1\leq s\leq N$. 

We describe the change from $\mathcal{F}(V)$ to $\mathcal{F}(W)$ when $V$ and $W$ differ by a single mutation. 
Let $R \in \binom{[n]}{3}$ index the unique triple of vectors that become linearly dependent during the mutation; the corresponding triple of great circles in $S^2$ intersect in a pair of antipodal points during the mutation. As discussed above, immediately before and immediately after the mutation, these three great circles bound an antipodal pair of triangular faces $\sigma,-\sigma$ in $\mathcal{A}(V)$ 
(the \emph{disappearing} triangles) and a corresponding pair of triangular faces $\tau,-\tau$ in $\mathcal{A}(W)$ (the \emph{appearing} triangles), respectively, see Figures~\ref{fig:0k3k} and \ref{fig:1k2k}. 

Recall that the \emph{type} of $\tau$ is defined as the pair $(j,k)$, where $j=|R\cap Y_-|$ and $k=|([n]\setminus R)\cap Y_-|$ and $Y\in \mathcal{F}(W)$ is the signature of $\tau$,
The types of $\sigma$, $-\tau$ and $-\sigma$ are $(3-j,k)$, $(3-j,n-3-k)$ and $(j,n-3-k)$, respectively, see Figures~\ref{fig:0k3k} and \ref{fig:1k2k}.
We say that the mutation $V\to W$ is of \emph{type~$(j,k)\equiv(3-j,n-3-k)$} (the pair of types of the triangles $\tau,-\tau$ that appear).

We have $F\in \mathcal{F}(V)\setminus \mathcal{F}(W)$ iff $F$ corresponds to a face of $\arr(V)$ contained in one of the disappearing triangular cells $\sigma,-\sigma$, and $F\in \mathcal{F}(W)\setminus \mathcal{F}(V)$ iff $F$ corresponds to a face of $\arr(W)$ contained in one of the appearing triangular cells $\tau,-\tau$. All other faces are preserved, i.e., they belong to $\mathcal{F}(V) \cap \mathcal{F}(W)$. 
Let us define $f_\sigma(x,y):=\sum_{F\subseteq \sigma}  x^{|F_0|}y^{|F_-|}$, 
where we use the notation $F\subseteq \sigma$ to indicate that the sum ranges over all $F \in \mathcal{F}(V)$ corresponding to faces of $\arr(V)$ contained in $\sigma$. The polynomials $f_{-\sigma}(x,y)$,  $f_\tau(x,y)$, and $f_{-\tau}(x,y)$ are defined analogously. 
These four polynomials have a simple form:
\begin{eqnarray*}
& f_\sigma (x,y)=y^k\left[(x+1)^{j}(x+y)^{3-j}-x^3\right],\quad f_{-\sigma}(x,y)=y^{n-3-k}\left[(x+1)^{3-j}(x+y)^{j}-x^3\right] & \\
& f_\tau(x,y)=y^k\left[(x+1)^{3-j}(x+y)^{j}-x^3\right],\quad f_{-\tau}(x,y)=y^{n-3-k}\left[(x+1)^{j}(x+y)^{3-j}-x^r\right]
\end{eqnarray*}
Thus, we get the following:
\begin{lemma}
\label{lem:f-mutation}
Let $V\to W$ be a mutation of Type~$(j,k)\equiv(3-j,n-3-k)$. Then
\begin{equation}
\label{eq:f-g-mutation}
f_W(x,y)-f_V(x,y) = \left(y^k - y^{n-3-k}\right)\left[(x+1)^{3-j}(x+y)^{j} - (x+1)^{j}(x+y)^{3-j}\right]
\end{equation}
\end{lemma}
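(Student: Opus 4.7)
The plan is to exploit the fact that a mutation is a local event: only the sub-faces of $\sigma,-\sigma,\tau,-\tau$ differ between $\arr(V)$ and $\arr(W)$, while all other faces have identical signatures and cancel in pairs. So the first step is to write
$$f_W(x,y)-f_V(x,y) \;=\; f_\tau(x,y)+f_{-\tau}(x,y)-f_\sigma(x,y)-f_{-\sigma}(x,y),$$
and then to verify the four displayed formulas for these local polynomials.

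For $f_\tau$, I would enumerate the sub-faces of the triangle $\tau$. These correspond bijectively to subsets $S\subseteq R$ with $|S|\in\{0,1,2\}$, where $S$ records which of the great circles indexed by $R$ the sub-face lies on; the case $|S|=3$ is excluded because immediately after the mutation no three of these great circles are concurrent. Indices outside $R$ have constant sign over $\tau$, contributing the common factor $y^k$ (where $k$ is the number of negative signs outside $R$). For an index $i_m\in R$, the option ``$i_m\in S$'' contributes $x$ (from a zero sign), while ``$i_m\notin S$'' contributes $y$ if $Y_{i_m}=-1$ and $1$ if $Y_{i_m}=+1$. Multiplying independently over the three indices of $R$ yields $(x+y)^j(x+1)^{3-j}$ as the sum over all $S\subseteq R$; subtracting the excluded $|S|=3$ term $x^3$ gives
$$f_\tau(x,y)=y^k\bigl[(x+1)^{3-j}(x+y)^j-x^3\bigr].$$
The formulas for $\sigma,-\tau,-\sigma$ follow identically by substituting their respective types $(3-j,k)$, $(3-j,n-3-k)$, $(j,n-3-k)$ in place of $(j,k)$.

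Substituting the four polynomials into the difference, the $x^3$ contributions cancel in pairs (one $y^k$-pair from $f_\tau-f_\sigma$, one $y^{n-3-k}$-pair from $f_{-\tau}-f_{-\sigma}$). Collecting by the prefactors $y^k$ and $y^{n-3-k}$, and noting that the bracketed polynomial paired with $y^{n-3-k}$ is exactly the negative of the one paired with $y^k$, factors the result as
$$(y^k-y^{n-3-k})\bigl[(x+1)^{3-j}(x+y)^j-(x+1)^j(x+y)^{3-j}\bigr],$$
which is the stated identity. The only step that is not purely mechanical is the local sub-face enumeration inside each small triangle; once the four closed-form polynomials are in hand, the remaining manipulation is a routine cancellation and factoring, so I do not expect a substantive obstacle.
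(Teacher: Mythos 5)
Your proof is correct and follows essentially the same route as the paper: decompose $f_W-f_V$ into the four local polynomials $f_\tau+f_{-\tau}-f_\sigma-f_{-\sigma}$, derive their closed forms from the type data $(j,k)$, and cancel. The paper states the four formulas without derivation; your subset-of-$R$ enumeration (dropping $|S|=3$) is precisely the justification those formulas require, and your final cancellation matches the paper's.
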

As remarked in Sec.~\ref{sec:Gale} above, the polynomials $f_V(x,y)$ and $f^*_V(x,y)$ determine each other via the identity \eqref{eq:fstar-f}. This implies the following (which can also be proven by a direct analysis of how $\mathcal{F}^*$ and hence $f^*$ change during a mutation):
\begin{lemma}
\label{lem:fstar-mutation}
Let $V\to W$ be a mutation of Type~$(j,k)\equiv(3-j,n-3-k)$. Then
\begin{equation}
\label{eq:fstar-g-mutation}
f^*_W(x,y)-f^*_V(x,y) = \left(y^j - y^{3-j}\right)\left[(x+1)^{k}(x+y)^{n-3-k} - (x+1)^{n-3-k}(x+y)^{k}\right]
\end{equation}
\end{lemma}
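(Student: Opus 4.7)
The plan is to reduce Lemma~\ref{lem:fstar-mutation} to the already-established Lemma~\ref{lem:f-mutation} via the universal identity \eqref{eq:fstar-f} that converts between $f_V$ and $f^*_V$. Specifically, applying \eqref{eq:fstar-f} with $r=3$ to both $V$ and $W$ and subtracting, the two terms $(x+y+1)^n$ and $(-1)^r x^n = -x^n$ are independent of the configuration and cancel, leaving
\begin{equation*}
f^*_W(x,y)-f^*_V(x,y) \;=\; -(x+1)^n\Bigl[f_W\bigl(-\tfrac{x}{x+1},\tfrac{x+y}{x+1}\bigr)-f_V\bigl(-\tfrac{x}{x+1},\tfrac{x+y}{x+1}\bigr)\Bigr].
\end{equation*}

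The next step is a direct substitution. Writing $X:=-x/(x+1)$ and $Y:=(x+y)/(x+1)$, two convenient simplifications occur: $X+1=\tfrac{1}{x+1}$ and $X+Y=\tfrac{y}{x+1}$. Thus
\begin{equation*}
(X+1)^{3-j}(X+Y)^j - (X+1)^j (X+Y)^{3-j}\;=\;(x+1)^{-3}\bigl(y^j-y^{3-j}\bigr),
\end{equation*}
and
\begin{equation*}
Y^k-Y^{n-3-k}\;=\;(x+1)^{-k}(x+y)^k - (x+1)^{-(n-3-k)}(x+y)^{n-3-k}.
\end{equation*}
Substituting these into the formula of Lemma~\ref{lem:f-mutation} yields an expression for $f_W(X,Y)-f_V(X,Y)$ with an overall factor $(x+1)^{-n}$ after collecting; multiplying by $-(x+1)^n$ cancels the denominator cleanly and produces exactly the right-hand side of \eqref{eq:fstar-g-mutation}, with the sign flipping the bracket into the stated order $(x+1)^k(x+y)^{n-3-k}-(x+1)^{n-3-k}(x+y)^k$.

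The proof is therefore essentially a one-line substitution into the already-proved mutation formula for $f_V$, followed by cancellation of $(x+1)$-powers; there is no real obstacle beyond bookkeeping. The only place to be careful is sign tracking: the identity \eqref{eq:fstar-f} carries a factor $-(x+1)^n$ in front of $f_V$ evaluated at the shifted arguments, and this minus sign is what swaps the order of the two terms inside the second bracket of \eqref{eq:fstar-g-mutation} relative to the naive parallel with \eqref{eq:f-g-mutation}. As the statement itself notes, one could alternatively argue combinatorially by tracking which dependency sign patterns of $V$ are lost and which patterns of $W$ are gained during the mutation indexed by $R\in\binom{[n]}{3}$—but that route requires a case analysis of how the two-dimensional space of dependencies supported on $R\cup\{i\}$ (for $i\notin R$) reorients as the triple $R$ crosses linear dependence, and it is considerably more delicate than the algebraic reduction above.
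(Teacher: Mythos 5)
Your proposal is correct and takes exactly the route the paper indicates: apply the $f\leftrightarrow f^*$ conversion identity \eqref{eq:fstar-f} to both $V$ and $W$, subtract (the configuration-independent terms cancel), and substitute the shifted arguments into the mutation formula of Lemma~\ref{lem:f-mutation}; the paper merely asserts this implication without writing out the substitution, and your computation of $X+1=\tfrac{1}{x+1}$, $X+Y=\tfrac{y}{x+1}$ and the resulting cancellation of $(x+1)$-powers (with the overall minus sign swapping the two terms in the second bracket) is accurate.
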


We are now ready to define the $g$-matrix $g(V\rightarrow W)$ of a pair of vector configurations. 
\begin{definition}[$g$-Matrix of a pair]
\label{def:g-pair}
Let $V,W$ be configurations of $n$ vectors in $\R^3$. 

If $V\to W$ is a single mutation of Type~$(i,\ell)\equiv(r-i,n-3-\ell)$ then we define the $g$-matrix 
$g(V\rightarrow W)=[g_{j,k}(V\rightarrow W)]$, $0\leq j\leq 3$ and $0\leq k \leq n-3$, as follows: 

If 
$2\ell=n-3$, then $g_{j,k}(V\to W)=0$ for all $j,k$. If 
$2\ell\neq n-3$, then
\[
g_{j,k}(V\to W):= 
\begin{cases}
  +1 & \textrm{if } (j,k)=(i,\ell) \textrm{ or }(j,k)=(3-i,n-3-\ell)\\
  -1 & \textrm{if } (j,k)=(3-i,\ell) \textrm{ or }(j,k)=(i,n-3-\ell)\\
  0 & \textrm{else.}  
\end{cases}
\]
More generally, if $V$ and $W$ are connected by a sequence $V=V_0, V_1,\dots, V_N=W$, where $V_{s-1}$ and $V_s$ differ by a single mutation, then we define
\[
g_{j,k}(V\to W):= \sum_{s=1}^N g_{j,k}(V_{s-1}\to V_s)
\]
\end{definition}

\begin{proof}[Proof of Theorem~\ref{thm:f-g}]
The skew-symmetries of the $g$-matrix and the fact that $g(V\to W)=-g(W\to V)$ and $g(U\to W)=g(U\to V)+g(V\to W)$ follow immediately from the definition. Moreover, \eqref{eq:f-poly-g-poly} and \eqref{eq:f*-g*} follow from Lemmas~\ref {lem:f-mutation} and \ref{lem:fstar-mutation}.
\end{proof}

By Gale duality, Theorem~\ref{thm:f-g} implies:
\begin{corollary} Let $V,W\in \R^{r\times n}$ be vector configurations, and let $V^*,W^*\in  \R^{(n-r)\times n}$ be their Gale duals. Then
\label{cor:g-pair-dual}
\[ g_{j,k}(V \to W) = -g_{k,j}(V^* \to W^*).\]
\end{corollary}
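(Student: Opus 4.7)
The plan is to derive the identity from two different ways of expressing the \emph{same} polynomial, once in rank $r$ and once in rank $n-r$. Gale duality immediately gives $\mathcal{F}^*(V)=\mathcal{F}(V^*)$ and $\mathcal{F}^*(W)=\mathcal{F}(W^*)$, and hence
\[
f^*_W(x,y)-f^*_V(x,y) \;=\; f_{W^*}(x,y)-f_{V^*}(x,y).
\]
I will apply the two halves of Theorem~\ref{thm:f-g} to express each side in terms of the relevant $g$-polynomial: formula \eqref{eq:f*-g*} in rank $r$ to the left-hand side, and formula \eqref{eq:f-poly-g-poly} in rank $n-r$ (applied to the Gale-dual configurations $V^*, W^*$) to the right-hand side.

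Carrying this out produces
\[
-(x+1)^{n-r}\,g_{V\to W}\!\left(y,\tfrac{x+y}{x+1}\right) \;=\; (1+x)^{n-r}\,g_{V^*\to W^*}\!\left(\tfrac{x+y}{1+x},y\right).
\]
Cancelling the common factor $(1+x)^{n-r}$ as rational functions leaves
\[
g_{V^*\to W^*}\!\left(\tfrac{x+y}{1+x},\,y\right) \;=\; -\,g_{V\to W}\!\left(y,\,\tfrac{x+y}{x+1}\right).
\]
I then substitute $u:=\tfrac{x+y}{1+x}$ and $v:=y$. This change of variables is birational, since $y=v$ and $x=\tfrac{u-v}{1-u}$, so an identity of rational functions before substitution is equivalent to a polynomial identity in $u,v$ after substitution. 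With $\tfrac{x+y}{x+1}=u$, the equation becomes $g_{V^*\to W^*}(u,v)=-g_{V\to W}(v,u)$. Matching coefficients of $u^{j}v^{k}$ on the two sides then yields $g_{j,k}(V^*\to W^*)=-g_{k,j}(V\to W)$, which is exactly the statement of the corollary.

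There is no real obstacle here; the argument is a formal manipulation once Theorem~\ref{thm:f-g} is in place. The one point that requires comment is that I apply the identities \eqref{eq:f-poly-g-poly} and \eqref{eq:f*-g*} not only in rank $r$ but also in rank $n-r$: this relies on the general-rank extension of the $g$-matrix and of Theorem~\ref{thm:f-g} mentioned in the remark following that theorem. With that in hand, the proof is pure bookkeeping.
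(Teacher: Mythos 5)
Your proof is correct and follows exactly the route the paper itself indicates: the paper presents Corollary~\ref{cor:g-pair-dual} with no written-out argument, stating only ``By Gale duality, Theorem~\ref{thm:f-g} implies,'' and your manipulation is a faithful expansion of that one-line justification. You correctly apply $f^*_V = f_{V^*}$ from Gale duality, equate the two expansions from parts~3 and~4 of Theorem~\ref{thm:f-g} (the latter in rank $r$, the former in rank $n-r$), cancel $(1+x)^{n-r}$, and invert the birational change of variables $u=\tfrac{x+y}{1+x}$, $v=y$ to read off $g_{j,k}(V^*\to W^*)=-g_{k,j}(V\to W)$; you also rightly flag that the step requires the general-rank form of Theorem~\ref{thm:f-g}, which the paper defers to~\cite{SW:DS} but implicitly invokes by stating the corollary for arbitrary $r$.
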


\section{The Upper Bound for $g_{1,k}^*$ in Rank $3$}
\label{sec:g1-arcs}
In this section, we prove Lemma~\ref{lem:g-gstar-j1}  and Theorem~\ref{thm:g1-rank3}.
Let $V=\{v_1,\dots,v_n\}\subset \R^3$ be a configuration of $n$ vectors in general position.  It will be convenient to view $V$ as the set $S-o=\{v_i=p_i-o\colon 1\leq i\leq n\}$ of differences between a point set $S=\{p_1,\dots,p_n\} \subset \R^3$ (the \emph{tips} of the vectors) and a fixed point $o\in \R^3$ (the \emph{origin}). 

Choose a line $\ell$ in $\R^3$ through $o$ in general position (in particular, $\ell$ is not contained in the linear span of any pair of vectors in $V$). By positively rescaling the vectors $v_i$, 
we may assume that $S$ is a subset of the cylinder  $Z$ consisting of the points in $\R^3$ at Euclidean distance exactly $1$ from $\ell$. It follows that $S$ is a point set in convex position.

In what follows, we will consider vector configurations of the form $S-p_0$, where $p_0\in \ell$, and how these configurations change as we move $p_0$ continuously along $\ell$. 
Given $S=\{p_1,\dots,p_n\}$ and $p_0\in \ell$, define $w_i:=(1,p_i) \in \{1\}\times \R^3 \subset \R^4$, $u_i:=\frac{1}{\|w_i\|}w_i \in S^3$, $0\leq i\leq n$, and $U:=\{u_1,\dots, u_n\}$. Let $U/u_0$ be the orthogonal projection of $U$ onto the hyperplane $u_0^\bot$ (viewed as a vector configuration in $u_0^\bot\cong \R^3$). Note that $U/u_0$ has the same combinatorial type as $S-p_0$. In particular, for $p_0=o$, $U/u_0$ has the same combinatorial type as $V=S-o$.
As $p_0$ moves along $\ell$, the corresponding vector $u_0 \in S^3$ moves along an open great semi-circle in $S^3$ (the radial projection of the line $\{1\}\times \ell$ onto $S^3$), whose closure is a closed great semi-circle $\gamma$ in $S^3$ that connects two antipodal points $u_{-\infty}$ and $u_{+\infty}$ (corresponding to ``points on $\ell$ at $\pm \infty$''). By general position of $\ell$, $\gamma$ is in general position with respect to $U$.

\begin{lemma} 
\label{lem:special}
The pair consisting of the vector configuration $U=\{u_1,\dots,u_n\}$ and the semi-circle $\gamma\subset S^3$ has the following properties:
\begin{enumerate}
\item The rank $4$ vector configuration $U \subset S^3 \subset \R^4$ is neighborly.
\item The rank $3$ vector configuration $U/u_{-\infty}=U/u_{+\infty}$ is neighborly.
\item Let $u_0 \in \gamma \subset S^3$ and $1\leq i \leq n$. Then the vector $u_i$ is extremal in $U \sqcup \{ u_0 \}$ (i.e., there exists $w\in S^3$ such that $\langle w,u_i\rangle =0$ and $\langle w,u_s\rangle > 0$ for $s\in \{0\}\sqcup ([n]\setminus \{i\})$). 
\end{enumerate}
\end{lemma}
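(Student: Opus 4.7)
The plan is to exploit the cylinder structure. Since $S = \{p_1, \ldots, p_n\}$ lies on the surface $Z$ of the solid convex cylinder $\bar{Z}$ of radius $1$ around $\ell$, each $p_i$ is an extreme point of $\bar{Z}$, and hence $S$ is in convex position in $\R^3$. By the generic choice of $\ell$ (with direction vector $e$), the orthogonal projections $q_i := p_i - \langle p_i, e/\|e\|\rangle\, e/\|e\|$ of the $p_i$ onto the plane $e^\bot$ through the origin will be pairwise distinct points on the unit circle in $e^\bot$.

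For parts (1) and (2), I would first note that rank-$4$ and rank-$3$ neighborliness both reduce to $1$-neighborliness (since $\lfloor 3/2 \rfloor = \lfloor 2/2 \rfloor = 1$), which amounts to requiring every singleton to be extremal. For (1), the vectors $w_i = (1, p_i)$ lie in the affine hyperplane $\{x_1 = 1\} \subset \R^4$, so $u_i$ is extremal in $U$ precisely when $p_i$ is a vertex of $\conv(S)$; this holds by the convex position of $S$. For (2), since $u_{+\infty} = (0, e/\|e\|)$ and $u_{-\infty} = -u_{+\infty}$ have the common orthogonal complement $\R \times e^\bot$, the projection of each $u_i$ onto this hyperplane is a positive multiple of $(1, q_i)$. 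Hence $U/u_{\pm\infty}$ has the same combinatorial type as the planar point configuration $\{q_1, \ldots, q_n\}$, which lies on a circle and is therefore in convex position, giving the required $1$-neighborliness.

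For part (3), I would distinguish two subcases according to whether $u_0 \in \gamma$ corresponds to a finite point on $\ell$ or to one of the endpoints $u_{\pm\infty}$. In the finite case, $u_0$ is a positive multiple of $w_0 = (1, p_0)$ for some $p_0 \in \ell$; since $p_0$ lies on the axis $\ell$ and hence in the interior of $\bar{Z}$, the convex hull $\conv(\{p_0\} \cup S)$ is contained in $\bar{Z}$ and each $p_i$ with $i \geq 1$ remains an extreme point, so $u_i$ is extremal in $U \cup \{u_0\}$. In the infinity case $u_0 = (0, \pm e/\|e\|)$, I would suppose for contradiction that $u_i \in \cone(\{u_0\} \cup \{u_j : j \geq 1,\, j \neq i\})$; after rescaling, this gives an identity $w_i = \mu_0 u_0 + \sum_{j \geq 1,\, j \neq i} \mu_j w_j$ with $\mu_j \geq 0$. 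Reading off the first coordinate forces $\sum_{j \neq i} \mu_j = 1$, while projecting the remaining three coordinates onto $e^\bot$ kills the $u_0$ contribution and yields $q_i = \sum_j \mu_j q_j$. Since $q_i$ is an extreme point of the closed unit disk and the $q_j$ are pairwise distinct, this is impossible.

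The main obstacle is the infinity case in (3): one cannot directly reduce to a convex-hull-in-$\R^3$ argument because the ``point at infinity'' has no geometric location there. The key trick is that projection onto $e^\bot$ eliminates the $u_{\pm\infty}$ contribution and converts the extremality question into strict convexity of the unit disk. The remaining steps are essentially direct geometric verifications.
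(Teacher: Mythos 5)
Parts~(1) and~(2) follow the paper's route. For part~(3) you proceed differently: the paper argues uniformly (for all $u_0\in\gamma$) via the tangent plane to the cylinder $Z$ at $p_i$, which contains the generator line $p_i+\R e$; by general position no other $p_j$ lies on that line, so all of $S\setminus\{p_i\}$ and every $p_0\in\ell$ lie strictly on one side.

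There is, however, a genuine error in your argument. The unbounded solid cylinder $\bar Z$ has \emph{no} extreme points: every boundary point $p_i$ lies on the generator line $p_i+\R e\subset Z$, so $p_i$ is the midpoint of $p_i\pm\eps e\in\bar Z$. Thus the claim ``$p_i$ is an extreme point of $\bar Z$'', which you use first to deduce convex position of $S$ and then again in the finite case of part~(3), is false. The conclusion is still correct, but the justification must come from somewhere else: either the tangent-plane-plus-general-position argument of the paper, or (more in the spirit of your own infinity case) the projection onto $e^\bot$. Indeed, if $p_i=\lambda_0 p_0+\sum_{j\neq i}\lambda_j p_j$ with $p_0\in\ell$, $\lambda_s\ge 0$, $\sum\lambda_s=1$, projecting to $e^\bot$ gives $q_i=\sum_{j\neq i}\lambda_j q_j$ (the contribution of $p_0$ projects to the centre); since $\|q_i\|=1$, $\|q_j\|=1$, and $\sum_{j\neq i}\lambda_j\le 1$, strict convexity of the disk forces all mass onto $q_j=q_i$, a contradiction (here general position is used to ensure the $q_j$ are distinct). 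This handles the finite case of~(3) and convex position of $S$ simultaneously, and in fact subsumes the infinity case as well, giving a single uniform argument.

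A smaller point: in the infinity case you reduce extremality of $u_i$ to ``$u_i\notin\cone(\{u_0\}\cup\{u_j\colon j\neq i\})$''. These are not equivalent in general: by Gordan-type duality, non-extremality of $u_i$ means $\sum_{s\neq i}\mu_s u_s=c\,u_i$ for some $\mu_s\ge 0$ summing to~$1$ and some $c\in\R$, and $c$ could be zero or negative. In your setting the cases $c\le 0$ are excluded by looking at first coordinates (all $u_j$ with $j\ge 1$, and $u_i$ itself, have positive first coordinate, while $u_{\pm\infty}$ has first coordinate~$0$), so the argument closes, but this step should be made explicit.
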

\begin{proof} Since the point set $S\subset \R^3$ is in convex position (by virtue of being contained in the cylinder $Z$), the vector configuration $U:=\{u_1,\dots,u_n\} \subset S^3 \subset \R^4$ is neighborly. 

Moreover, linear hyperplanes in $\R^4$ orthogonal to $u_{\pm \infty}$ correspond to affine planes in $\R^3$ that are parallel to the line $\ell$, or equivalently to affine lines in the plane $\ell^\bot$ orthogonal to $\ell$. Thus, the vector configuration $U/u_{-\infty}=U/u_{+\infty}$ is neighborly because the orthogonal projection of $S$ to $\ell^\bot\cong \R^2$ is in convex position. 

Finally, the third property follows because for every point $p_i \in S$, the tangent plane to the cylinder $Z$ at $p_i$ contains no other point of $P$, by general position, hence $\{p_0\}\sqcup P\setminus\{p_i\}$ is contained in one of the open affine halfspaces determined by that tangent plane.
\end{proof}
For $0\leq i \leq n$, let $H_i:=\{x\in S^3\colon \langle u_i,x\rangle =0\}$, and let $\mathcal{A}(U)$ be the arrangement of hemispheres $H_1^+,\dots, H_n^+$ in $S^3$. Note that the rank $3$ vector configuration $U/u_0$ is polar dual to the $2$-dimensional arrangement $\mathcal{A}(U)\cap H_0$ in $H_0 \cong S^2$.

\begin{definition}[$k$-Arcs, $\Lambda_k(U)$, $\lambda_k(U,u_0)$] 
\label{def:k-arcs}
Let $k\leq \frac{n-4}{2}$. Let $C=H_i\cap H_j$ be the great circle in $S^3$ formed by the intersection of two great $2$-spheres of the arrangement $\mathcal{A}(U)$, $1\leq i<j\leq n$. Consider the subgraph of $\mathcal{A}(U)$ consisting of the vertices and edges of $\mathcal{A}(U)$ contained in $C$ and at sublevel $(\leq k)$. This subgraph cannot cover the entire circle $C$, since $C$ contains at least one pair of antipodal vertices at levels $\floor{\frac{n-2}{2}}, \ceil{\frac{n-2}{2}}$. Thus, the connected components of this subgraph form closed intervals, which we call \emph{$k$-arcs} of $\mathcal{A}(U)$. 

We denote by  $\Lambda_k(U)$ the number of $k$-arcs in $\mathcal{A}(U)$, and we define $\lambda_k(U,u_0)$ as the number of $k$-arcs of $\mathcal{A}(U)$ that are completely contained in the open hemisphere $H_0^-$.
\end{definition}
Our main technical result (which by the preceding discussion implies both Lemma~\ref{lem:g-gstar-j1} and Theorem~\ref{thm:g1-rank3}) is the following. 
\begin{proposition}
\label{prop:lambda-g}
Consider a vector configuration $U=\{u_1,\dots,u_n\} \subset \R^4$ and a great semicircle $\gamma$ in $S^3$ with endpoints $u_{\pm \infty}$ such that the properties of Lemma~\ref{lem:special} are satisfied. 

Let $0\leq k\leq \floor{\frac{n-4}{2}}$,, and let $u_0\in \gamma$ such that $U\sqcup \{u_0\}$ is in general position. Then
\[
g_{1,k}(U/u_0)=\lambda_k(U,u_0)\qquad \textrm{and}\qquad g^*_{1,k}(U/u_0)=(k+1)n-3\binom{k+2}{2}-\lambda_k(U,u_0)
\]
\end{proposition}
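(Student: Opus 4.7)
The plan is to verify the first identity $g_{1,k}(U/u_0)=\lambda_k(U,u_0)$ by a continuous-motion argument, tracking how both sides change as $u_0$ traverses the semicircle $\gamma$; the second identity will then follow either by adding an independent proof of Lemma~\ref{lem:g-gstar-j1}, or by running the same matching argument a second time with $g^*_{1,k}$ in place of $g_{1,k}$. Both quantities are integer-valued and piecewise constant on $\gamma$, so the task reduces to (i) identifying the walls, (ii) matching signed changes at each wall, and (iii) establishing the value at one base point.

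The first key observation is that the walls for both sides are indexed by the same finite set, namely the vertices of the arrangement $\mathcal{A}(U)$ in $S^3$. Indeed, $U/u_0$ undergoes a mutation exactly when $u_0$ crosses a great $2$-sphere of the form $K_{ij\ell}:=\operatorname{span}(u_i,u_j,u_\ell)\cap S^3$, and one checks that $K_{ij\ell}$ is the great $2$-sphere orthogonal to the vertex $v=v_{ij\ell}\in H_i\cap H_j\cap H_\ell$ of $\mathcal{A}(U)$ (up to antipodal sign). On the other hand, $\lambda_k(U,u_0)$ changes only when the moving great $2$-sphere $H_0$ sweeps past a vertex of $\mathcal{A}(U)$ that is an endpoint of some $k$-arc. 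Hence the proof reduces to matching signed changes event by event.

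The core step is the local analysis at a crossing of the great $2$-sphere orthogonal to $v=v_{ij\ell}$. The mutating triple in $U/u_0$ is $R=\{i,j,\ell\}$, and the type $(j',k')$ of the mutation is determined by the signs $\sgn\langle v,u_m\rangle$ for $m\in[n]\setminus R$ (which encode the level of $v$ in $\mathcal{A}(U)$) together with $\sgn\langle v,u_0\rangle$ just before and just after the crossing. A case analysis guided by Figures~\ref{fig:0k3k} and~\ref{fig:1k2k} shows that the mutation contributes to $g_{1,k}(U/u_0)$ precisely when $v$ is an endpoint of a $k$-arc on one of the three great circles $H_i\cap H_j$, $H_i\cap H_\ell$, $H_j\cap H_\ell$, and that the sign of this contribution agrees with the sign of the corresponding change in $\lambda_k(U,u_0)$ (a $k$-arc entering versus leaving $H_0^{-}$). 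Part~(1) of Lemma~\ref{lem:special} is invoked to guarantee simplicity of $\mathcal{A}(U)$, and part~(3) to rule out degenerate configurations in which $v$ would play multiple incompatible roles.

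Finally I pin down the additive constant via a base point. Taking $u_0$ sufficiently close to $u_{-\infty}$, the configuration $U/u_0$ is combinatorially equivalent to the neighborly configuration $U/u_{-\infty}$ of Lemma~\ref{lem:special}(2), whose $f$- and $f^*$-matrices coincide with those of $\cyclic(n,3)$. By Definition~\ref{def:g-matrix} and Corollary~\ref{cor:f-g-fstar-gstar-coefficients}, this lets us evaluate $g_{1,k}(U/u_0)=g_{1,k}(\cocyclic(n,3)\to\cyclic(n,3))$ in closed form from $f^*(\cocyclic(n,3))-f^*(\cyclic(n,3))$. For the same $u_0$, $\lambda_k(U,u_0)$ reduces to counting the $k$-arcs of $\mathcal{A}(U)$ that lie in the fixed hemisphere $H_{-\infty}^{-}$, a quantity that can be read off from the neighborly structure of $U$ and is checked to match the previous value. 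The main obstacle I anticipate is the local sign-matching of paragraph three: the mutation type depends on several sign patterns and the exact correspondence with endpoints of $k$-arcs on three distinct great circles must be written out carefully in order to verify all cases with the correct signs.
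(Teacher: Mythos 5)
Your overall strategy---treating $u_0\mapsto g_{1,k}(U/u_0)$ and $u_0\mapsto\lambda_k(U,u_0)$ as piecewise-constant functions on $\gamma$, matching their signed jumps at the walls, and pinning down a base point---is exactly the one the paper uses (Lemma~\ref{lem:transitions}, Corollary~\ref{cor:difflambdak-g1k}). Two remarks. First, the wall-matching is genuinely delicate, and the precise statement you need is not ``the mutation contributes to $g_{1,k}$ iff $v$ is an endpoint of a $k$-arc'' (every level-$k$ vertex is an endpoint of three $k$-arcs); rather, one must classify the transition by the number $j\in\{0,1,2,3\}$ of level-$k$ edges at $v$ lying in $H_0^-$, show that types $(2,k)$ and $(3,k)$ \emph{cannot occur} (this is where Lemma~\ref{lem:special}(3) is used, ruling out a dependency with a single negative sign among $u_0,\dots,u_4$), and then check that type $(0,k)$ changes neither quantity while type $(1,k)$ changes both by $\mp 1$. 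You allude to ``degenerate configurations'' but you should make the exclusion of types $(2,k),(3,k)$ explicit, as the sign-matching would otherwise fail.

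Second, your base-point step is underspecified in a way that hides the real work. You propose to evaluate $\lambda_k(U,u_{-\infty})$ ``from the neighborly structure of $U$'' and separately compute $g_{1,k}(\cocyclic\to\cyclic)$ from $f^*$-matrices, then check they match. The paper instead computes $\lambda_k(U,u_{\pm\infty})$ by an argument you do not mention and which is the crux of the base-point evaluation: the total number of $k$-arcs is $\Lambda_k(U)=3(k+1)n-6\binom{k+2}{2}$ (each level-$k$ vertex is an endpoint of three $k$-arcs, and $f_{3,k}$ is known for the neighborly $U$ by Lemma~\ref{lem:k-facets-neighborly}); every $k$-arc either lies in $H_\infty^+$, lies in $H_\infty^-$, or crosses $H_\infty$, the last case being in bijection with sublevel-$(\leq k)$ vertices of the neighborly slice $\mathcal{A}(U)\cap H_\infty$; and $\lambda_k(U,u_{-\infty})=\lambda_k(U,u_{+\infty})$ because $U/u_{-\infty}=U/u_{+\infty}$ combined with Corollary~\ref{cor:difflambdak-g1k}. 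Without this decomposition it is not clear how one would ``read off'' $\lambda_k(U,u_{\pm\infty})$. A smaller point: the paper actually establishes the $g^*_{1,k}$ identity first and then obtains $g_{1,k}=\lambda_k$ from it, whereas you do it in the opposite order; either order works once Lemma~\ref{lem:g-gstar-j1} (which is just the universal constant $g_{1,k}(\cocyclic\to\cyclic)$) is in hand, so your deferral there is fine, but the base-point gap is the part you should address.
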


To prove Proposition~\ref{prop:lambda-g}, we consider how the vector configuration $U/u_0$ changes as $u_0$ moves continuously along $\gamma$. In terms of the dual arrangement, a mutation of $U/u_0$ occurs exactly when the great $2$-sphere $H_0$ dual to $u_0$ passes over a vertex $v$ of the arrangement 
$\mathcal{A}(U)$, whose level we will denote by $k$ (concurrently, $H_0$ passes over the antipodal vertex $-v$ of level $n-3-k$, so we will assume that $k\leq \floor{\frac{n-3}{2}}$). The vertex $v$ is incident to exactly three edges of $\mathcal{A}(U)$ at level $k$ (each of which corresponds to a $k$-arc with endpoint $v$). Consider the moment right before or right after the mutation when $v$ lies in the negative halfspace $H_0^-$, and let $j\in \{0,1,2,3\}$ denote the number of level $k$ edges of $\mathcal{A}(U)$ incident to $v$ that are contained in the negative hemisphere $H_0^-$. We call this a \emph{transition of Type~$(j,k)^+$ or $(j,k)^-$} depending on whether $v$ lies in $H_0^+$ or in $H_0^-$ after the mutation.

\begin{lemma}
\label{lem:transitions}
Let $0\leq k\leq \floor{\frac{n-4}{2}}$, and suppose that $u_0$ moves continuously along $\gamma$. 
\begin{enumerate}
\item There are no transitions of Type~$(2,k)$ or $(3,k)$.
\item if $(U,u_0)\to (U,u_0')$ is a transition of Type~$(0,k)^{\pm}$ then $\lambda_k$ and $g_{1,k}$ remain unchanged (as well as all other $g_{j,\ell}$, $\ell \neq k$), and $g_{0,k}(U/u_0 \to U,u'_0) = \mp 1$; 
\item if $(U,u_0)\to (U,u_0')$ is a transition of Type~$(1,k)^{\pm}$ then $g_{0,k}$  (as well as all other $g_{j,\ell}$, $\ell \neq k$) remain unchanged, $\lambda_k(U,u'_0)=\lambda_k(U,u_0) \mp 1$, and $g_{0,k}(U/u_0 \to U,u'_0) = \mp 1$.
\end{enumerate}
\end{lemma}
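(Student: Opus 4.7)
The proof is a local analysis at the vertex $v$ of $\arr(U)$ being crossed by $H_0$ during the transition. Writing $R = \{i,j,m\}$ so that $v \in H_i \cap H_j \cap H_m$, the vectors $u_i, u_j, u_m$ form a basis of $v^\bot = T_v S^3$; let $\hat u_i, \hat u_j, \hat u_m$ denote the dual basis ($\langle u_s, \hat u_t\rangle = \delta_{st}$). A direct check shows that the three level-$k$ edges of $\arr(U)$ at $v$, one along each great circle $C_s := \bigcap_{r \in R \setminus \{s\}} H_r$, point in the directions $+\hat u_i, +\hat u_j, +\hat u_m$ (since $+\hat u_s$ is the direction in which $\langle u_s,\cdot\rangle$ turns positive, leaving the level at $k$). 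At the moment of the mutation $u_0 \in v^\bot$, so $u_0 = \alpha_i u_i + \alpha_j u_j + \alpha_m u_m$ with $\alpha_s = \langle u_0, \hat u_s\rangle$, and the edge along $C_s$ is locally contained in $H_0^-$ iff $\alpha_s < 0$; thus the Section~\ref{sec:g1-arcs} integer $j$ equals $|\{s \in R : \alpha_s < 0\}|$.

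Part~1 (no Type $(2,k)$ or $(3,k)$) follows directly from Lemma~\ref{lem:special}(3). Suppose $\alpha_i, \alpha_j < 0$ at the mutation. Applying Lemma~\ref{lem:special}(3) to $u_m$ at a generic perturbation of $u_0$ (where the signs of the $\alpha_s$ persist by continuity) yields $w \in S^3$ with $\langle w, u_m\rangle = 0$ and $\langle w, u_t\rangle > 0$ for all $t \in \{0\} \cup [n]\setminus\{m\}$. Expanding, $\langle w, u_0\rangle = \alpha_i \langle w, u_i\rangle + \alpha_j \langle w, u_j\rangle + \alpha_m\cdot 0 < 0$, contradicting $\langle w, u_0\rangle > 0$. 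Hence at most one of the $\alpha_s$ can be negative, so $j \leq 1$.

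For Parts~2 and 3 the same local picture delivers both the $g$-matrix change and the $\lambda_k$ change. The appearing triangle $\tau \subset H'_0$ is the bounded central triangle cut out of the three planes $\{y_s = 0\}_{s \in R}$ by the affine plane $\sum_s \alpha'_s y_s = -\alpha'_v$ in tangent coordinates on $v^\bot$, where $\alpha'_v = \langle u'_0, v\rangle$ and $\alpha'_s \approx \alpha_s$. Its vertex opposite the side $\{y_s = 0\}$ lies on the $\hat u_s$-axis at $y_s = -\alpha'_v/\alpha'_s$, so the signature $Y$ of $\tau$ satisfies $Y_s = -\sgn(\alpha'_v\,\alpha'_s)$. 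In the $+$ case ($\alpha'_v > 0$), this gives $|R \cap Y_-| = |\{s : \alpha_s > 0\}| = 3-j$, so the mutation has Section~\ref{s:intro} type $(3-j, k)$; Definition~\ref{def:g-pair} together with the skew-symmetry \eqref{eq:g-skew} then yields $g_{j,k} \mapsto g_{j,k}-1$, with $g_{1-j,k}$ and all entries at levels other than $k$ (and its skew-symmetric partner $n-3-k$) unchanged; the $-$ case is symmetric. For $\lambda_k$, only the three $k$-arcs $\Gamma_s$ incident to $v$ can change status at the transition, since other $k$-arcs stay bounded away from $v, -v$ under the infinitesimal motion of $H_0$. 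Because $k \leq \lfloor(n-4)/2\rfloor$ forces the antipode $-v \in C_s$ to have level $n-3-k > k$, the arc $\Gamma_s$ does not approach $-v$, and a direct parametrization of $C_s$ shows $\Gamma_s \subseteq H_0^-$ iff $v \in H_0^-$ and $\alpha_s < 0$. Case analysis then gives: in Type $(0,k)^\pm$ no $\Gamma_s$ is in $H_0^-$ either before or after, so $\lambda_k$ is unchanged; in Type $(1,k)^\pm$ exactly one $\Gamma_s$ is in $H_0^-$ in the $v \in H_0^-$ moment and none in the $v \in H_0^+$ moment, so $\lambda_k$ changes by $\mp 1$, matching the $g_{1,k}$ change.

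The main obstacle is the sign and orientation bookkeeping required to pin down the signature $Y$ of the appearing triangle $\tau$, and hence the precise correspondence between Section~\ref{sec:g1-arcs} transition types $(j,k)^\pm$ and Section~\ref{s:intro} mutation types $(i,\ell)$; once this correspondence is fixed, the remaining $g$-matrix and $\lambda_k$ computations are a routine finite case check.
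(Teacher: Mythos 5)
Your proof is correct and follows essentially the same route as the paper's: Lemma~\ref{lem:special}(3) is the key tool for Part~1, only the three $k$-arcs incident to $v$ are analyzed for the $\lambda_k$ change, and the $g$-matrix change is read off from the type of the appearing triangle via Definition~\ref{def:g-pair}. Two points of comparison are worth noting. First, you handle Types $(2,k)$ and $(3,k)$ uniformly, applying Lemma~\ref{lem:special}(3) to the single index $m$ and observing that two negative $\alpha$'s already force $\langle w,u_0\rangle<0$; the paper instead splits into two cases, ruling out Type $(3,k)$ by pointedness of $\{u_0\}\sqcup U$ and Type $(2,k)$ by introducing an auxiliary hemisphere $H_4$ and invoking Lemma~\ref{lem:special}(3). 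Your unified argument is cleaner and does not use the hypothesis $k\leq\floor{(n-4)/2}$ at all. Second, you replace the paper's figure-based identification of the mutation type with an explicit dual-basis computation showing that the appearing triangle $\tau$ has signature $Y_s=-\sgn(\alpha'_v\alpha'_s)$, which pins down the Section~\ref{s:intro} type as $(3-j,k)$ resp.\ $(j,k)$; this is more verbose but verifiable without the pictures. One small imprecision: in Part~1 there is no need to pass to ``a generic perturbation of $u_0$''---Lemma~\ref{lem:special}(3) is stated for any $u_0\in\gamma$ and its proof (via the tangent plane to the cylinder) does not require $U\sqcup\{u_0\}$ to be in general position, so you may apply it directly at the mutation moment where the exact relation $u_0=\alpha_iu_i+\alpha_ju_j+\alpha_mu_m$ holds; working at a perturbed $u_0'$ would require justifying that the dropped $\beta'\langle w,v\rangle$ term is dominated, which is avoidable. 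Finally, note that the lemma as printed contains a typo in Part~3 ($g_{0,k}(U/u_0\to U/u'_0)=\mp1$ should read $g_{1,k}$), which your argument implicitly and correctly fixes.
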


\begin{corollary} 
\label{cor:difflambdak-g1k}
For $0\leq k\leq \floor{\frac{n-4}{2}}$ and pair $u_0,u'_0 \in \gamma$, 
\[
\lambda_k(U,u'_0) - \lambda_k(U,u_0) = g_{1,k}(U/u'_0)- g_{1,k}(U/u_0)= g_{1,k}( (U/u_0) \to (U/u'_0)) 
\]
\end{corollary}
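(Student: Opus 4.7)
My plan is to derive the corollary from Lemma~\ref{lem:transitions} by a continuous-motion (telescoping) argument. The second equality is essentially built into the $g$-matrix formalism: from Definition~\ref{def:g-matrix} and the additivity $g(A \to C) = g(A \to B) + g(B \to C)$ stated in Theorem~\ref{thm:f-g}(1),
\[
g_{1,k}(U/u'_0) - g_{1,k}(U/u_0)
= g_{1,k}(\cocyclic(n,3) \to U/u'_0) - g_{1,k}(\cocyclic(n,3) \to U/u_0)
= g_{1,k}\bigl((U/u_0) \to (U/u'_0)\bigr),
\]
so this equality requires no further work.

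For the first equality, I would connect $u_0$ to $u'_0$ by a generic continuous path $u_0(t)$, $t \in [0,1]$, inside $\gamma$, arranged so that $U \sqcup \{u_0(t)\}$ is in general position except at finitely many times $0 < t_1 < \dots < t_N < 1$, each producing exactly one of the transitions classified in Lemma~\ref{lem:transitions} (the existence of such a path is the same general-position argument already used in Section~\ref{sec:g-matrix} to define mutations). Setting $u_s := u_0(t_s + \varepsilon)$ for small $\varepsilon > 0$, so that $u_0 = u_0(0)$ and $u_N = u'_0$, additivity of $g$ along composed motions together with the obvious telescoping for $\lambda_k$ yields
\[
\lambda_k(U, u'_0) - \lambda_k(U, u_0) = \sum_{s=1}^{N} \bigl(\lambda_k(U, u_s) - \lambda_k(U, u_{s-1})\bigr),
\]
\[
g_{1,k}\bigl((U/u_0) \to (U/u'_0)\bigr) = \sum_{s=1}^{N} g_{1,k}\bigl((U/u_{s-1}) \to (U/u_s)\bigr),
\]
so it suffices to match the two sums term by term.

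This matching is immediate from Lemma~\ref{lem:transitions}: Types $(2,k)$ and $(3,k)$ never occur; for a transition of Type $(0,k)^{\pm}$ both summands vanish; for a transition of Type $(1,k)^{\pm}$ both summands equal $\mp 1$. For any transition whose associated level $\ell$ differs from $k$, neither $\lambda_k$ (which by definition depends only on the $k$-arcs of $\mathcal{A}(U)$ and the position of $H_0$ relative to their endpoints) nor the entry $g_{1,k}$ of the single-mutation $g$-matrix is affected; the latter follows from Definition~\ref{def:g-pair} together with the skew-symmetries~\eqref{eq:g-skew}, since under the assumption $k \le \lfloor (n-4)/2 \rfloor$ the antipodal image $n-3-k$ lies strictly above the range of "lower" levels $\ell \le \lfloor (n-3)/2 \rfloor$ at which mutations are recorded.

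The main technical content of the overall argument is really Lemma~\ref{lem:transitions} itself, in particular the Type $(1,k)^{\pm}$ case where one must verify that a single $k$-arc endpoint crossing $H_0$ corresponds to exactly the sign change $\mp 1$ in both $\lambda_k$ and $g_{1,k}$. Once that lemma is in hand, the corollary follows by the routine telescoping and term-matching described above; the only slightly delicate point to record is that transitions recorded at level $\ell \ne k$ genuinely contribute zero on \emph{both} sides, which is what the skew-symmetries and the range restriction on $k$ are used for.
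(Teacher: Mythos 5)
Your proof is correct and follows essentially the same route as the paper intends: the paper states no separate proof for this corollary, treating it as an immediate consequence of Lemma~\ref{lem:transitions} by telescoping the transitions that occur as $u_0$ slides along $\gamma$, which is exactly what you do. The second equality is indeed pure bookkeeping from the additivity in Theorem~\ref{thm:f-g}(1), and the term-by-term matching via Types $(0,k)$, $(1,k)$ (with $(2,k)$, $(3,k)$ excluded) is the content of the lemma. Your explicit handling of transitions at levels $\ell \neq k$ --- checking that $\lambda_k$ is insensitive to them because arcs of $\mathcal{A}(U)$ are fixed and their containment in $H_0^-$ only flips when $H_0$ crosses a level-$k$ endpoint, and that $g_{1,k}$ is insensitive because the single-mutation $g$-matrix is supported on columns $\ell$ and $n-3-\ell$, both $\neq k$ in the relevant range (and vanishes outright when $2\ell = n-3$) --- is a detail the paper leaves implicit, and it is worth recording exactly as you do. One very small wrinkle: the genericity producing finitely many transition events does not come from choosing a generic path (the path is constrained to lie in $\gamma$) but from the general-position choice of the line $\ell$ (hence of $\gamma$) made at the start of Section~\ref{sec:g1-arcs}; also note that the paper's statement of Lemma~\ref{lem:transitions}(3) writes $g_{0,k}$ where the context and Figure~\ref{fig:1klambda} make clear it should read $g_{1,k}$, which you have implicitly corrected.
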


\begin{proof}[Proof of Lemma~\ref{lem:transitions}]
Suppose there is a transition of Type~$(2,k)^{\pm}$ or Type~$(3,k)^{\pm}$. Let $v$ be the corresponding vertex at level $k$ in $\mathcal{A}(U)$ and consider the moment just before or after the transition when $v$ is in the negative hemisphere $H_0^-$. Without loss of generality, let $H_1,H_2,H_3$ be the three great $2$-spheres of $\mathcal{A}(U)$ intersecting in $v$ and in the antipodal vertex $-v$, which is at level $n-3-k$. Since $k\leq \floor{\frac{n-4}{2}}$, we have $k<n-3-k$, hence there is another hemisphere of $\mathcal{A}$, say $H_4$, such that $v\in H_4^+$ and $-v\in H_4^-$.

If the transition is of  Type~$(3,k)^{\pm}$, this implies that the intersection of the hemispheres $\bigcap_{i=0}^4 H_0^+$ is empty, which is impossible since $\{u_0\}\sqcup U$ is a pointed configuration. Suppose then that the transition is of Type~$(2,k)^{\pm}$. Consider the unique level $k$ edge of $\mathcal{A}(U)$ incident to $v$ that is not contained in $H_0^-$. This edge lies in the intersection of two out of the three great $2$-spheres of $\mathcal{A}(U)$ intersecting in $v$, say in the intersection $H_2\cap H_3$. It follows that the intersection $H_0^+\cap H_1^- \cap \bigcap_{i=2}^4 H_i^+$ is empty. This means that there is a linear dependency between the vectors $u_0,u_1,u_2,u_3,u_4$ with exactly one negative coefficient, corresonding to $u_1$. This contradicts Property~3 of Lemma~\ref{lem:special} (one of the defining properties of a special pair).
\begin{figure}[ht]
\begin{center}
\includegraphics[scale=1]{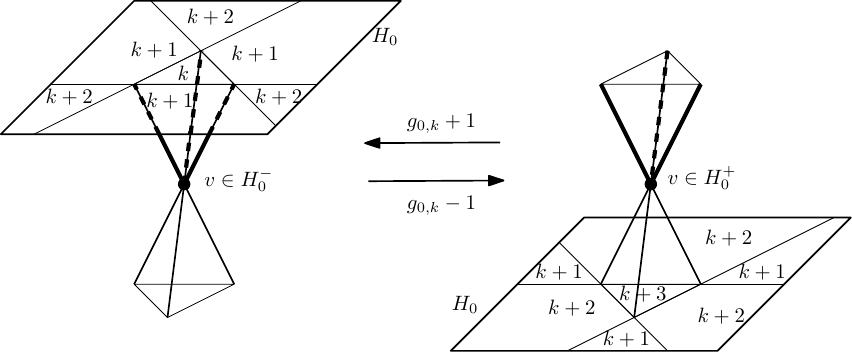}
\end{center}
\caption{Transitions of Type $(0, k)^+$ (left to right) and $(0, k)^-$ (right to left) and the change in $g_{0,k}(U/u_0)$; the labels $k,k+1,k+2,k+3$ show the levels of the $2$-cells in $\mathcal{A}\cap H_0$.}
\label{fig:0klambda}
\end{figure}

\begin{figure}[ht]
\begin{center}
\includegraphics[scale=1]{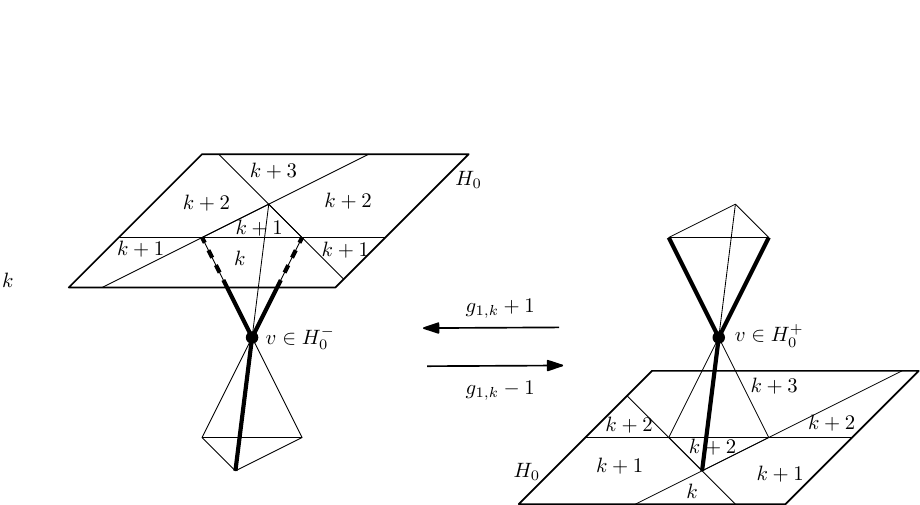}
\end{center}
\caption{Transitions of Type $(1, k)^+$ (left to right) and $(1, k)^-$ (right to left) and the corresponding change in $g_{1,k}(U/u_0)$; the labels $k,k+1,k+2,k+3$ show the levels of the $2$-cells in $\mathcal{A}\cap H_0$.}
\label{fig:1klambda}
\end{figure}

Only the three $k$-arcs ending at $v$ can contribute to a change in $\lambda_k$ during a transition. If the transition is of Type~$(0,k)^{\pm}$, then the three $k$-arcs incident to $v$ intersect $H_0^+$ both before and after the transition, so $\lambda_k$ does not change. If the transition is of Type~$(1,k)^{\pm}$, then there is a unique $k$-arc incident to $v$ that intersects $H_0^-$, and this $k$-arc contributes $1$ to $\lambda_k$ when $v\in H_0^-$ and $0$ otherwise. This shows that $\lambda_k$ changes as stated. 

For the changes in $g_{j,k}(U/u_0)$, $j=0,1$, see Figures~\ref{fig:0klambda} and \ref{fig:1klambda}.\end{proof}
We will need the following well-known fact (see, e.g., \cite[Corollary~8]{Welzl:2001aa}).
\begin{lemma} 
\label{lem:k-facets-neighborly}
Let $W\subset \R^r$ be a neighborly arrangment of $n$ vectors, $r\in \{3,4\}$, and let $\mathcal{A}(W)$ be the polar dual arrangement in $S^d$, $d=r-1$. Then for $0\leq k\leq n-r+1$, the number $f_{d,k}(W)$ of vertices of $\mathcal{A}(W)$ at level $k$ equals $f_{2,k}(W)=n$ for $r=3$, and $f_{3,k}(W)=2n(k+1)-4\binom{k+2}{2}=2(k+1)(n-k-2)$ for $r=4$.
\end{lemma}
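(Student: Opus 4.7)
The plan is to handle the cases $r = 3$ and $r = 4$ separately, using in both that neighborliness forces $W$ to be pointed (as observed after Definition~\ref{def:Gale}), so that $W$ identifies via radial projection with a point set $S \subset \R^{r-1}$ whose convex hull is a simplicial $(r-1)$-polytope with every $p_i$ as a vertex. The overall strategy is to enumerate $f_{d,k}$ directly on the cyclic representative $\cyclic(n, r)$ and then extend the formula to all neighborly configurations.

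For $r = 3$, $S$ is in convex position in $\R^2$. After labeling the $p_i$ cyclically, I would observe that each unordered pair $\{p_i, p_j\}$ contributes one great circle to $\arr(W)$, and that the antipodal vertex pair at its intersections with the remaining great circles sits at levels equal to the two arc-sizes of $S \setminus \{p_i, p_j\}$ cut by the chord $p_i p_j$. I would then parametrize pairs by cyclic distance $d$---$n$ pairs at distance $d$ for $1 \leq d < n/2$ (each contributing one vertex at level $d-1$ and one at $n-d-1$) and, when $n$ is even, $n/2$ pairs at $d = n/2$ (each contributing two vertices at level $(n-2)/2$), all with arc sizes $(d-1, n-d-1)$---and sum the contributions to each fixed level $k \in \{0, \dots, n-2\}$ to conclude $f_{2,k}(W) = n$ in every parity case.

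For $r = 4$, I would first compute $f_{3,k}(\cyclic(n,4))$ directly from the moment-curve parametrization $p_i = (1, t_i, t_i^2, t_i^3)$ with $t_1 < \dots < t_n$. For any triple $R = \{i, j, l\}$ with $i < j < l$ and any $m \notin R$, the sign of $\prod_{s \in R}(t_m - t_s)$ determines the side of the hyperplane spanned by $\{p_i, p_j, p_l\}$ on which $p_m$ lies. Partitioning $[n] \setminus R$ into its four contiguous intervals of sizes $(c, a, b, d)$ with $a + b + c + d = n - 3$, the two alternating intervals give a split $(a + d, b + c)$; counting compositions yields $(k+1)(n-2-k)$ triples with $a + d = k$, and summing the contributions from triples with $a + d = k$ and with $b + c = k$ gives $f_{3,k}(\cyclic(n,4)) = 2(k+1)(n-k-2)$. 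To extend this to an arbitrary neighborly rank-$4$ configuration $W$, I would invoke the continuous-motion framework of Section~\ref{sec:g-matrix}: any two neighborly configurations are connected by a sequence of mutations, and a case analysis using Lemma~\ref{lem:f-mutation} of the mutation types compatible with neighborliness shows that these leave $f_{3,k}$ invariant---equivalently, that the relevant $g$-matrix entries vanish; alternatively, one may cite \cite[Corollary~8]{Welzl:2001aa} directly. The main obstacle will be this mutation classification, the key point being that a neighborliness-preserving mutation must not change whether any single vertex is extremal, which constrains the admissible flip types and forces antipodally balanced creation/destruction of faces that cancel in the net change of $f_{3,k}$.
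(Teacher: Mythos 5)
The paper does not prove this lemma at all---it cites it as a ``well-known fact'' with a pointer to \cite[Corollary~8]{Welzl:2001aa}. So your proposal is attempting something the paper doesn't do; the comparison is therefore against a bare citation, and the question is whether your direct argument is sound.

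For $r=3$ your count is essentially correct, although the set-up is garbled: each \emph{vector} $w_i$, not each pair, contributes a great circle $H_i$ to $\arr(W)$; a pair $\{i,j\}$ contributes the antipodal \emph{vertex} pair $H_i \cap H_j$, whose two levels are the sizes of the two arcs of $S \setminus\{p_i,p_j\}$ cut off by the chord. With that correction the parametrization by cyclic distance $d$ and the parity bookkeeping do give $f_{2,k}=n$. You should also note explicitly that for $r=3$ every neighborly configuration has the oriented matroid of a convex $n$-gon, so computing for a single convex polygon already covers all neighborly $W$; no extension step is needed.

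For $r=4$ the Vandermonde computation of $f_{3,k}(\cyclic(n,4))=2(k+1)(n-k-2)$ is correct. The real content, however, is the extension from $\cyclic(n,4)$ to an arbitrary neighborly $W$---unlike in rank $3$, simplicial $3$-polytopes with $n$ vertices are \emph{not} all combinatorially equivalent, so the constancy of the $k$-facet numbers over this class is a genuine theorem (it is precisely Welzl's Corollary~8). Your proposed continuous-motion/mutation argument is only a sketch, and the step you flag as ``the main obstacle'' is exactly the gap: you would have to (i) show that any two neighborly rank-$4$ configurations are connected by a path of mutations every one of whose intermediate configurations is again neighborly, which is a nontrivial geometric connectivity claim and not just flip-connectivity of triangulated $2$-spheres, and (ii) classify which mutation types can occur between two neighborly configurations and verify via the rank-$4$ analogue of Lemma~\ref{lem:f-mutation} (not the rank-$3$ version you cite) that each one leaves every $f_{3,k}$ unchanged. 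Neither step is carried out. You do correctly identify \cite[Corollary~8]{Welzl:2001aa} as the alternative, and that citation is in fact the paper's entire proof; absent a completed mutation analysis, that citation is doing all the work in the $r=4$ case.
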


\begin{proof}[Proof of Proposition~\ref{prop:lambda-g}]
For every $k$-arc $\alpha$ of  $\mathcal{A}$, the two endpoints of $\alpha$ are vertices of $\mathcal{A}$ at level $k$ (while the interior of a $k$-arc consists of edges at sublevel $(\leq k)$. Conversely, every vertex at level $k$ is an endpoint of exactly three $k$-arcs (corresponding to the three edges at level $k$ incident to that vertex). Thus, By Lemma~\ref{lem:k-facets-neighborly},
\begin{equation}
\label{eq:Lambdak}
\Lambda_k(U)=3(k+1)(n-k-2)=3(k+1)n-6\binom{k+2}{2}
\end{equation}
Consider the great $2$-sphere $H_{\infty}:=\{x\in S^3\colon \langle u_{+\infty},x\rangle =0\}$ and the two open hemispheres $H^+_{\infty}:=\{x\in S^3\colon \langle u_{+\infty},x\rangle > 0\}$
and $H^-_{\infty}:=\{x\in S^3\colon \langle u_{+\infty},x\rangle < 0\}$. The $2$-dimensional arrangement $\mathcal{A}\cap H_{\infty}$ is polar dual to the neighborly configuration $U/u_{+\infty}=U/u_{+\infty}$, hence the number of vertices of $\mathcal{A}\cap H_{\infty}$ at sublevel $(\leq k)$ equals $(k+1)n$.

Consider a $k$-arc $\alpha$ of $\mathcal{A}$. Then either $\alpha \subset H^+_{\infty}$, or $\alpha \subset H^-_{\infty}$, or $\alpha$ intersects $H_\infty$; $k$-arcs of the first two kinds are counted by $\lambda_k(U,u_{-\infty})$ and $\lambda_k(U,u_{+\infty})$, respectively, while $k$-arcs of the third kind correspond bijectively to vertices at sublevel $(\leq k)$ in $\mathcal{A}\cap H_{\infty}$. Therefore,
\[
\Lambda_k(U)=3(k+1)n-6\binom{k+2}{2}=\lambda_k(U,u_{-\infty}) + \lambda_k(U,u_{+\infty}) +(k+1)n
\]
hence
\begin{equation}
\label{eq:Lambdak-lambdak}
\lambda_k(U,u_{-\infty}) + \lambda_k(U,u_{+\infty}) = 2(k+1)n + 6\binom{k+2}{2}
\end{equation}
Consider now a unit vector $u_0 \in \gamma$. By applying Corollary~\ref{cor:difflambdak-g1k} with $u'_0 =u_{+\infty}$ and using that $U/u_{+\infty}$ is neighborly, we get
\begin{equation}
\label{eq:difflambda-diffg1star}
g^*_{1,k}(U/u_0) = g_{1,k}((U/u_0) \to (U/u_{+\infty}))
= \lambda_k(U,u_{+\infty}) - \lambda_k(U,u_0)
\end{equation}
In particular, since $U/u_{+\infty}$ and $U/u_{-\infty}$ are identical, Equations~\eqref{eq:Lambdak-lambdak} and \eqref{eq:difflambda-diffg1star}  imply
\begin{equation}
\label{eq:lambdapminfty}
\lambda_k(U,u_{+\infty}) = \lambda_k(U,u_{-\infty}) = (k+1)n + 3\binom{k+2}{2}
\end{equation}
Substituting this back into \eqref{eq:difflambda-diffg1star}, we get, for every $u_0\in \gamma$,
\begin{equation}
\label{eq:g1k-lambdak}
g^*_{1,k}(U/u_0) = (k+1)n-3\binom{k+2}{2} - \lambda_k(U,u_0)
\end{equation}
which proves the second half of Proposition~\ref{prop:lambda-g}.

It remains to show that $g^*_{1,k}(V)= (k+1)n-3\binom{k+2}{2}$ for $0\leq k\leq \floor{\frac{n-4}{2}}$ whenever $V$ is coneighborly. This follows from the following observation: If  $V=S-o=U/u_0$ is coneighborly, then every vertex $v$ of $\mathcal{A}(U)$ at level $k$ must be contained in the positive hemisphere $H_0^+$ (otherwise, $v$ would correspond to an affine hyperplane $H$ in $\R^3$ that is spanned by three points of $S$ and such that $o\in H^-$ and $|H^-\cap S|\leq k$; but then we could translate $H$ towards $o$ and obtain a hyperplane through $o$ that still contains at most $\floor{\frac{n-4}{2}}= \floor{\frac{n-2}{2}}-1$ points of $S$, contradicting coneighborliness of $V=S-o$). It follows from the observation that all $k$-arcs of $\mathcal{A}(U)$ are contained in $H_0^+$, hence $\lambda_k(U,u_0)=0$, which together with \eqref{eq:g1k-lambdak} implies 
$g^*_{1,k}(V)= (k+1)n-3\binom{k+2}{2}$ as we wanted to show.
\end{proof}

We remark that the notion of $k$-arcs in arrangements in $S^3$ (and in particular Equation~\eqref{eq:Lambdak}) have been used before in related contexts, see, e.g., \cite{Edelsbrunner:1989aa,Ramos:2009aa}. 

\bibliography{g-matrix}

\appendix
\section{The Remaining Calculation for the Spherical Arc Crossing Number}
\label{sec:appendix-calculation}
We want to show that 
\begin{equation}
\sum_{k=0}^{\floor{\frac{n-4}{2}}} (n-3-2k)\left((k+1)n-3\binom{k+2}{2}\right)
=\binom{n}{4}-X(n)
\tag{\ref{eq:f22-star-Xn}}
\end{equation}
where 
\[
X(n)=\frac{1}{4}\Big\lfloor \frac{n}{2}\Big\rfloor \Big\lfloor \frac{n-1}{2} \Big\rfloor \Big\lfloor \frac{n-2}{2} \Big\rfloor \Big\lfloor \frac{n-3}{2}\Big\rfloor
\]
The left-hand side of \eqref{eq:f22-star-Xn} equals
\begin{equation}
\label{eq:sums}
(n-3)n\sum_{k=0}^{\floor{\frac{n-4}{2}}} (k+1) 
- 3(n-3)\sum_{k=0}^{\floor{\frac{n-4}{2}}} \binom{k+2}{2}
- 2n\sum_{k=0}^{\floor{\frac{n-4}{2}}} k(k+1) 
+ 6 \sum_{k=0}^{\floor{\frac{n-4}{2}}} k\binom{k+2}{2}
\end{equation}
Using the basic identity
\[ 
\sum_{k = 0}^m \binom{k+i}{i} = \binom{m+i+1}{i+1} 
\]
we see that \eqref{eq:sums} equals
\begin{eqnarray*}
& \displaystyle n(n-3) \binom{\floor{\frac{n}{2}}}{2} - 3 (n-3) \binom{\floor{\frac{n+2}{2}}}{3} - 4n \binom{\floor{\frac{n}{2}}}{3} + 18 \binom{\floor{\frac{n+2}{2}}}{4} & \\[1ex]
=& \displaystyle \left\lfloor \frac{n}{2} \right\rfloor \left\lfloor \frac{n-2}{2}\right\rfloor \underbrace{\left( \frac{1}{2} n(n-3) - \frac{1}{2} (n-3) \left\lfloor \frac{n+2}{2} \right\rfloor -\frac{2}{3}n \left\lfloor \frac{n-4}{2} \right \rfloor + \frac{3}{4} \left\lfloor \frac{n+2}{2} \right\rfloor \left\lfloor \frac{n-4}{2} \right\rfloor \right)}_{(*)}\\
= &  \displaystyle \left\lfloor \frac{n}{2} \right\rfloor \left\lfloor \frac{n-2}{2}\right\rfloor \underbrace{\left( \frac{2}{3} \left( \left\lfloor \frac{n-1}{2} \right\rfloor+ \frac{1}{2} \right) \left( \left\lfloor \frac{n-3}{2} \right\rfloor+ \frac{1}{2} \right) - \frac{1}{4} \left\lfloor \frac{n-1}{2} \right\rfloor \left\lfloor \frac{n-3}{2} \right\rfloor \right)}_{(**)}
\end{eqnarray*}
where the last step amounts to the assertion that the expressions (*) and (**) are equal as quadratic polynomials in $n$, which is easy to check. Equation~\eqref{eq:f22-star-Xn} then follows from the identify 
\[ 
\binom{n}{4} = \frac{2}{3}\left\lfloor \frac{n}{2} \right\rfloor \left\lfloor \frac{n-2}{2}\right\rfloor \left( \left\lfloor \frac{n-1}{2}\right\rfloor + \frac{1}{2} \right)  \left( \left\lfloor \frac{n-3}{2}\right\rfloor + \frac{1}{2} \right) 
\]
which is likewise easy to verify. \hfill \qed

\end{document}